\theoremstyle{plain}
\newtheorem{theorem}{Theorem}[section]
\newtheorem{lemma}[theorem]{Lemma}
\newtheorem{prop}[theorem]{Proposition}
\theoremstyle{definition}
\newtheorem{exx}[theorem]{Example}
\newtheorem{deff}[theorem]{Definition}
\newtheorem{rmk}[theorem]{Remark}
\newcommand{\F}{\mathbb{F}}
\newcommand{\Q}{\mathbb{Q}}
\newcommand{\E}{\mathbb{E}}
\newcommand{\Z}{\mathbb{Z}}
\newcommand{\AsCatA}{\mathcal{P}_A}
\newcommand{\AsCatC}{\mathcal{P}_C}
\newcommand{\RC}{R_{\mathbb{C}}}
\newcommand{\kRF}{kR_{\mathbb{F}}}
\newcommand{\kRQ}{kR_{\mathbb{Q}}}
\newcommand{\GreenCZ}{\mathcal{F}^{\mu}_{\mathcal{C},\Z}}
\newcommand{\GreenDk}{\mathcal{F}^{\mu}_{\mathcal{D},k}}
\newcommand{\StarGDk}{\mathcal{F}^{\mu, \star}_{\mathcal{D},k}}
\newcommand{\invol}{\_^{\star}}
\newcommand{\Dual}{\_^{\bullet}}
\title{Anti-involutions on Green biset functors}
\author{
Robert Boltje\\
Department of Mathematics\\
University of California\\
Santa Cruz, CA 95064\\
U.S.A.\\
\texttt{boltje@ucsc.edu}
\and 
Benjam\'in Garc\'ia\\
Centro de Ciencias Matem\'aticas\\
Universidad Nacional Aut\'onoma de M\'exico\\
Morelia, Michoac\'an 58089\\
Mexico\\
\texttt{benjamingarcia@matmor.unam.mx}}
\begin{document}

\maketitle

\begin{abstract}
In this article, we propose a concept of anti-involution for Green biset functors and we provide equivalent definitions. We present $\star$-Green biset functors and we study their orthogonal units and the orthogonal automorphisms in their associated categories.
\end{abstract}

\section{Introduction}

On many representations rings that we can associate to a finite group, taking \textit{dual representations} induces anti-involutions that are natural with respect to biset operations. In the case of bisets and bimodules, taking \textit{duals} reverses the actions of the groups inducing a contravariant automorphism on the associated category of the respective representation functor. These operations play an important role in the study of unit groups of some representation rings like the Burnside ring and trivial source module rings by means of the notion of \textit{orthogonal unit} \cite{Bar,BolPer,Carman}, and they are a key ingredient in the definition of \textit{$p$-permutation equivalence} \cite{BolPer2}.

We propose an approach to anti-involutions and duality in the context of Green biset functors. The necessary background on Green biset functors is provided in Section 2. Through Section 3, we study associated categories and their linear functors, we present a map of algebras $\widetilde{\_}:A(G)\longrightarrow End_{\AsCatA}(G)$ for a Green $\mathcal{D}$-biset functor $A$ and a group $G\in Ob(\mathcal{D})$, generalizing the map $B(G)\longrightarrow B(G,G)$ given in \cite[2.5.9]{BoucBook}, and we prove that $\AsCatA^{op}\cong \mathcal{P}_{A^{op}}$, where $A^{op}$ is the \textit{opposite Green biset functor}. In Section 4, we define an \textit{anti-involution} on $A$ as an endomorphism $\_^{\star}$ such that each component arrow is an anti-involution of algebras, and we prove that this is the same as having an isomorphism of Green biset functors $\delta:A\longrightarrow A^{op}$ such that $\delta^{op}=\delta^{-1}$, as well as having a linear isomorphism $\_^{\bullet}:\AsCatA\longrightarrow \AsCatA^{op}$ satisfying additional conditions. A \textit{$\star$-Green biset functor} is then defined to be a Green biset functor $A$ together with an anti-involution $\_^{\star}$. Finally, Section 5 is devoted to the study of generalized \textit{orthogonal units} and \textit{orthogonal automorphisms} for a $\star$-Green biset functor $A$, where we deduce several relations from the tilde maps and morphisms of $\star$-Green biset functors.

\section{Green biset functors}

In this section, we provide a brief exposition on Green biset functors. We refer the reader to \cite{BoucRom} or \cite{Gar2} for details. Throughout this text, the letters $G$, $H$, $K$, $L$,... will denote finite groups. Rings and algebras are assumed to be associative and unital, and their homomorphisms send units to units.

For groups $H$ and $G$, $B(H,G)$ denotes the \textit{Burnside group of $(H,G)$-bisets}. Given a $(K,H)$-biset $Y$ and an $(H,G)$-biset $X$, their \textit{composition} is denoted by $Y\times_H X$ or simply $Y\circ X$, and we write $[y,x]$ for the class of a pair $(y,x)$ in $Y\circ X$. Composition of bisets induces associative bi-additive maps
\begin{equation}
\_\circ\_: B(K,H)\times B(H,G)\longrightarrow B(K,G), ([Y],[X])\mapsto [Y\circ X]   
\end{equation}
for all $K$, $H$ and $G$. This leads to the definition of the \textit{biset category} $\mathcal{C}$, having for objects all finite groups and hom-sets $Hom_{\mathcal{C}}(G,H):= B(H,G)$, with composition given by the maps in (1) and $\text{Id}_G=[G]$ (the class of the regular $(G,G)$-biset) for each $G$.

If $X$ is an $(H,G)$-biset, its \textit{opposite} $X^{op}$ is just $X$ as a $(G,H)$-biset for the actions $g\cdot x\cdot h=h^{-1}xg^{-1}$, for $x\in X$, $g\in G$ and $h\in H$. Taking opposites induces isomorphisms of abelian groups
\begin{equation}
\_^{op}:B(H,G)\longrightarrow B(G,H), [X]\mapsto [X^{op}]
\end{equation}
for all $H$ and $G$, satisfying $(\beta \circ \alpha)^{op}=\alpha^{op}\circ \beta^{op}$ and $(\alpha^{op})^{op}=\alpha$ for all $\alpha\in B(H,G)$ and $\beta \in B(K,H)$, and $\text{Id}_G^{op}=\text{Id}_G$. These maps define an isomorphism $\_^{op}:\mathcal{C}\longrightarrow \mathcal{C}^{op}$ which is the identity in objects, satisfying $\_^{op}\circ \_^{op}=Id_{\mathcal{C}}$.

Each $(H,G)$-biset $X$ can be regarded as an $H\times G$-set for the action $(h,g)\cdot x:=hxg^{-1}$, for $x\in X$ and $(h,g)\in H \times G$, denoted by $\overrightarrow{X}$, and each $H\times G$-set $X$ is an $(H,G)$-biset for $h\cdot x\cdot g:=(h,g^{op})x$. So we have isomorphisms of abelian groups
\begin{align}
   \overrightarrow{(\_)}:B(H,G)\longrightarrow B(H\times G), [X]\mapsto [\overrightarrow{X}]
\end{align}
for all $H$ and $G$.

Given an $(H,G)$-biset $X$ and an $(L,K)$-biset $Y$, their \textit{external product} is their cartesian product $X\times Y$ regarded as an $(H\times L,G\times K)$-biset, and again this operation extends to biadditive maps
\begin{equation}
\times:B(H,G)\times B(L,K)\longrightarrow B(H\times L,G\times K), ([X],[Y])\mapsto [X\times Y]
\end{equation}
for all $G$, $H$, $K$ and $L$, inducing a bifunctor $\times:\mathcal{C}\times \mathcal{C} \longrightarrow \mathcal{C}$ for which $\mathcal{C}$ is a symmetric monoidal category.

For a group $G$, the $(G\times G, 1)$-biset $\overrightarrow{G}$ is $G$ with the actions $(g_1,g_2)\cdot g\cdot 1:=g_1gg_2^{-1}$ for $g\in G$, $(g_1,g_2)\in G\times G$, while $\overleftarrow{G}$ stands for its opposite $(1,G\times G)$-biset. If $\phi:G\longrightarrow H$ is a group homomorphism, the \textit{induction by $\phi$}, denoted by $Ind(\phi)$, is $H$ regarded as an $(H,G)$-biset for the group multiplication, and in a similar way, the \textit{restriction by $\phi$} is $H$ as a $(G,H)$-biset, denoted by $Res(\phi)$. If $\phi$ is an isomorphism, we write $Iso(\phi)$ instead of $Ind(\phi)$. These bisets generalize the basic biset operations.

We write $\Delta=\Delta_G:G\longrightarrow G\times G$ for the \textit{diagonal morphism} $g\mapsto (g,g)$, and given groups $G_1$, $G_2$, ..., $G_n$, we write $\rho_{G_1,...,G_n}:\prod_{i=1}^n G_i\longrightarrow \prod_{i=1}^n G_{n + 1-i}$ for the isomorphism that reverses the coordinates. When $n=2$, we will write $\tau_{G,H}$ instead of $\rho_{G,H}$.

\begin{lemma} \thlabel{Rflxns}
The following isomorphisms of bisets hold:
\begin{enumerate}
    \item $Iso(\tau_{G,H})\circ Iso(\tau_{H,G})\cong H\times G$.
    \item $Iso(\rho_{G,H,K,L})\cong Iso(\tau_{H\times G,L\times K})\circ (Iso(\tau_{G,H})\times Iso(\tau_{K,L}))$ 
    
    $\cong  (Iso(\tau_{K,L})\times Iso(\tau_{G,H}))\circ Iso(\tau_{G\times H,K\times L})$.
    \item $Iso(\tau_{G,H})\circ Inf_G^{G\times H}\cong Inf_G^{H\times G}$, $Res^{H\times G}_G\circ Iso(\tau_{G,H})\cong Res^{G\times H}_H$.
    \item $Iso(\tau_{G,G})\circ Ind(\Delta)\cong Ind(\Delta)$, $Res(\Delta)\circ Iso(\tau_{G,G})\cong Res(\Delta)$.   
    \item $Iso(\tau_{G, K})\circ (G\times \overleftarrow{H} \times K)\cong (K\times \overleftarrow{H}\times G)\circ Iso(\rho_{G,H,H,K})$.
    \item $Iso(\tau_{G,G})\circ \overrightarrow{G}\cong \overrightarrow{G}$.
\end{enumerate}
\end{lemma}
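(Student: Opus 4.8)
The plan is to split the six isomorphisms into two families according to the tools needed. Items (1)--(4) are purely \emph{functorial}: each reduces to an identity between group homomorphisms together with the standard composition and external-product formulas for the elementary bisets $Iso$, $Ind$, $Res$ and $Inf$ recalled above (covariance of $Iso$ and $Ind$, contravariance of $Res$, and the compatibility $Iso(\phi)\times Iso(\psi)\cong Iso(\phi\times\psi)$). Items (5)--(6) instead genuinely involve the bisets $\overrightarrow{G}$ and $\overleftarrow{H}$, and I would prove these by writing down an explicit bijection of underlying sets and checking that it is equivariant for the two biset structures, the inversion map $g\mapsto g^{-1}$ being the key ingredient.

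For (1) I would use functoriality of $Iso$ to rewrite the composite as $Iso(\tau_{G,H}\circ\tau_{H,G})$ and note $\tau_{G,H}\circ\tau_{H,G}=\mathrm{id}_{H\times G}$, so the result is the regular biset $H\times G$. For (2) the same device applies twice: the external product $Iso(\tau_{G,H})\times Iso(\tau_{K,L})$ equals $Iso(\tau_{G,H}\times\tau_{K,L})$, and composing with $Iso(\tau_{H\times G,L\times K})$ gives $Iso\bigl(\tau_{H\times G,L\times K}\circ(\tau_{G,H}\times\tau_{K,L})\bigr)$. The remaining task is the combinatorial check that this composite sends $(g,h,k,l)\mapsto(h,g,l,k)\mapsto(l,k,h,g)$, i.e.\ equals $\rho_{G,H,K,L}$; the second displayed isomorphism in (2) is handled symmetrically.

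For (3) and (4) I would identify the inflations with restrictions along the appropriate projections (so that $Inf_G^{G\times H}$ and $Inf_G^{H\times G}$ correspond to the two projections onto $G$) and the diagonal $Ind(\Delta)$ with $\Delta\colon G\to G\times G$, and then apply the composition formulas. The point for (3) is that $\tau$ converts one projection into the other, with the restriction statement following by the dual (opposite-biset) identity; the point for (4) is simply $\tau_{G,G}\circ\Delta=\Delta$, after which functoriality finishes the claim.

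The core work is (5) and (6). For (6), writing $\overrightarrow{G}$ as the set $G$ with $(g_1,g_2)\cdot g=g_1 g g_2^{-1}$, I would check that inversion $g\mapsto g^{-1}$ is a bijection intertwining this action with the one obtained after precomposing with $\tau_{G,G}$, namely $(g_1,g_2)\cdot g=g_2 g g_1^{-1}$, since $(g_1 g g_2^{-1})^{-1}=g_2 g^{-1}g_1^{-1}$. Item (5) is the main obstacle: both sides are $(K\times G,\,G\times H\times H\times K)$-bisets, and I would construct the comparison map by tracking a representative $[y,x]$ through the external products and the coordinate reversal $Iso(\rho_{G,H,H,K})$, again using inversion on the $\overleftarrow{H}$-factor to align the two actions. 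The difficulty here is bookkeeping rather than any conceptual point: one must verify that the candidate bijection is well defined on the composition classes and simultaneously equivariant on both sides \emph{after} the fourfold permutation of coordinates, and I expect this simultaneous verification to be where the real effort lies.
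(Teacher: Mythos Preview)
Your plan is correct and aligns with the paper's proof: the authors likewise declare (1)--(4) obvious and establish (5) by writing down an explicit biset isomorphism, namely $[(k_1,g_1),(g_2,h,k_2)]\mapsto [(k_1k_2,h^{-1},g_1g_2),(1,1,1,1)]$, which is precisely the ``inversion on the $\overleftarrow{H}$-factor'' you anticipate. The one point of divergence is (6): you argue directly that $g\mapsto g^{-1}$ intertwines the two $(G\times G)$-actions on $G$, which is perfectly valid, whereas the paper instead observes that (6) follows from (5) by taking opposites (specializing $G=K=1$ in (5) yields $\overleftarrow{H}\cong\overleftarrow{H}\circ Iso(\tau_{H,H})$ after the obvious identifications, and passing to opposite bisets gives (6)). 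Your direct check is more self-contained; the paper's derivation avoids a second explicit computation and makes the link between (5) and (6) transparent.
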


\begin{proof}
Assertions (1) - (4) are obvious. To prove (5), notice that the map
$$Iso(\tau_{G, K})\circ (G\times \overleftarrow{H} \times K)\longrightarrow (K\times \overleftarrow{H}\times G)\circ Iso(\rho_{G,H,H,K}),$$
$$[(k_1,g_1),(g_2,h,k_2)]\mapsto [(k_1k_2,h^{-1},g_1g_2),(1,1,1,1)]$$
is an isomorphism of $(K\times G,G\times H\times H\times K)$-bisets with inverse given by
$$[(k_1,h_1,g_1),(k_2,h_2,h_3,g_2)]\mapsto [(1,1),(g_1g_2,h_3^{-1}h_1^{-1}h_2,k_1k_2)],$$
and (6) follows by taking opposites and applying (5).
\end{proof}

\begin{lemma}\thlabel{totheleft} 
Let $Y$ be a $(K,H)$-biset and $X$ be an $(H,G)$-biset.
\begin{enumerate}
    \item $\overrightarrow{Y\circ X}\cong (K\times \overleftarrow{H} \times G)\circ (\overrightarrow{Y}\times \overrightarrow{X})$ as $K\times G$-sets.
    \item $\overrightarrow{\text{Id}_G}\cong \overrightarrow{G} \circ \{\bullet\}$ as $G\times G$-sets.
    \item $\overrightarrow{X^{op}}\cong Iso(\tau_{H,G})\circ \overrightarrow{X}$ as $G\times H$-sets. 
\end{enumerate}
\end{lemma}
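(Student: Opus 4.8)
The plan is to treat all three parts as explicit comparisons of sets with group action ($K\times G$-, $G\times G$-, and $G\times H$-sets respectively), obtained by unwinding the definition of the flattening operation $\overrightarrow{(\_)}$ together with the standard description of a composite $Z\circ W$ as the set of pairs $[z,w]$ taken modulo the diagonal action of the intermediate group. In each case I would exhibit a concrete equivariant bijection and check that it is well defined on the relevant orbit sets; the structural isomorphisms of \thref{Rflxns}, in particular the biset $\overleftarrow{H}$ and the swaps $Iso(\tau)$, serve to match the two sides at the level of the acting groups.

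Parts (2) and (3) are essentially bookkeeping. For (3) I would simply compute both actions on the underlying set $X$: on $\overrightarrow{X^{op}}$ an element $(g,h)\in G\times H$ acts by $(g,h)\cdot x=g\cdot_{X^{op}}x\cdot_{X^{op}}h^{-1}=hxg^{-1}$, using the definition $g\cdot x\cdot h=h^{-1}xg^{-1}$ of $X^{op}$, while on $Iso(\tau_{H,G})\circ\overrightarrow{X}$ the group $G\times H$ acts through $\tau_{H,G}^{-1}\colon(g,h)\mapsto(h,g)$ followed by the action on $\overrightarrow{X}$, again giving $hxg^{-1}$; hence the identity map $x\mapsto x$ is the desired isomorphism. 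For (2) the point is that $\overrightarrow{\mathrm{Id}_G}$ and $\overrightarrow{G}$ are literally the same $G\times G$-set $G$ with $(g_1,g_2)\cdot g=g_1gg_2^{-1}$, and composing $\overrightarrow{G}$ with the one-point $(1,1)$-biset $\{\bullet\}$ on the right changes nothing, so the statement is definitional.

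The real content is (1). Writing $Y\circ X=Y\times_H X$ with the relation $[yh,x]=[y,hx]$ and the $K\times G$-action $(k,g)\cdot[y,x]=[ky,xg^{-1}]$, I would describe the right-hand side concretely: an element of $(K\times\overleftarrow{H}\times G)\circ(\overrightarrow{Y}\times\overrightarrow{X})$ is a class $[(\kappa,\eta,\gamma),(y,x)]$ with $\kappa\in K$, $\eta\in H$, $\gamma\in G$, $y\in Y$, $x\in X$, taken modulo the diagonal action of the intermediate group $K\times H\times H\times G$, which acts on the left factor through the $(K\times G,K\times H\times H\times G)$-biset structure of $K\times\overleftarrow{H}\times G$ and on the right factor by $(k',h_1,h_2,g')\cdot(y,x)=(k'yh_1^{-1},h_2xg'^{-1})$. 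Using this intermediate action I would normalize every such class to the form $[(1,1,1),(y,x)]$, and then read off that the only surviving relation is $(y,x)\sim(yh^{-1},hx)$ for $h\in H$, coming from the right-stabilizer $(1,h,h,1)$ of $(1,1,1)$; this is exactly the defining relation of $Y\times_H X$. The map $[y,x]\mapsto[(1,1,1),(y,x)]$ is then the claimed isomorphism, and a short computation gives $(k,g)\cdot[(1,1,1),(y,x)]=[(k,1,g),(y,x)]=[(1,1,1),(ky,xg^{-1})]$, so it is $K\times G$-equivariant.

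The main obstacle is precisely the bookkeeping in (1): keeping straight the two separate copies of $H$ carried by $\overrightarrow{Y}\times\overrightarrow{X}$ and verifying that contracting them through $\overleftarrow{H}$ reproduces the single quotient $\times_H$, that is, that the normalization collapses the two $H$-factors into the correct diagonal and yields the relation $(y,x)\sim(yh^{-1},hx)$ rather than a twisted variant. Once the orbit description and the residual relation are pinned down correctly, well-definedness and equivariance are routine, and this identity is exactly the multiplicativity of $\overrightarrow{(\_)}$ with respect to composition of bisets.
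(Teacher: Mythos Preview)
Your proposal is correct and follows essentially the same approach as the paper: the paper simply declares Parts~1 and~2 ``straightforward'' and for Part~3 gives the explicit map $x\mapsto[(1,1),x]$ with inverse $[(g,h),x]\mapsto hxg^{-1}$, which is exactly your identity map once one identifies $Iso(\tau_{H,G})\circ\overrightarrow{X}$ with $\overrightarrow{X}$ equipped with the twisted action. Your explicit orbit computation for Part~1, normalizing to $[(1,1,1),(y,x)]$ and reading off the residual stabilizer $(1,h,h,1)$, is precisely the content the paper suppresses under ``straightforward''.
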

\begin{proof}
Parts 1 and 2 are straightforward, and for Part 3, notice that 
$$\overrightarrow{X^{op}}\longrightarrow Iso(\tau_{H,G})\circ \overrightarrow{X},x\mapsto [(1,1),x]$$
is an isomorphism of $G\times H$-sets with inverse $[(g,h),x]\mapsto hxg^{-1}$.
\end{proof}

Given a commutative ring $k$ and a pre-additive subcategory $\mathcal{D}\subseteq \mathcal{C}$, a \textit{$\mathcal{D}$-biset functor} (\textit{over $k$}) is a $k$-linear functor $F:k\mathcal{D}\longrightarrow {_kMod}$, where $k\mathcal{D}$ stands for the $k$-linearization of $\mathcal{D}$. We write $\mathcal{F}_{\mathcal{D},k}$ for the category of $\mathcal{D}$-biset functors over $k$ and natural transformations.

\begin{deff}[Romero] \thlabel{GBF1}
Let $k$ be a commutative ring and $\mathcal{D}\subseteq \mathcal{C}$ be a full subcategory that is closed under taking subquotients and products in objects. A \textit{Green $\mathcal{D}$-biset functor} (\textit{over $k$}) is a $\mathcal{D}$-biset functor $A$ such that $A(G)$ is a unital $k$-algebra for every $G\in Ob(\mathcal{D})$, and for every group homomorphism $\phi:G\longrightarrow H$ with $G,H\in Ob(\mathcal{D})$, the following conditions are satisfied:
    \begin{enumerate}
    \item The map $A(Res(\phi)):A(H)\longrightarrow A(G)$ is a $k$-algebra homomorphism,
    \item (\textit{Frobenius identities}) For all $a\in A(H)$ and all $b\in A(G)$, we have
    $$aA(Ind(\phi))(b)=A(Ind(\phi))(A(Res(\phi))(a)b),$$
    $$A(Ind(\phi))(b)a=A(Ind(\phi))(bA(Res(\phi))(a)).$$
    \end{enumerate}

A \textit{morphism of Green biset functors} is a natural transformation whose component arrows are $k$-algebra homomorphisms, and $\GreenDk$ stands for the \textit{category of Green $\mathcal{D}$-biset functors over $k$}.
\end{deff}

A Green $\mathcal{D}$-biset functor can also be defined as a $\mathcal{D}$-biset functor $A$ endowed with bilinear applications $\times:A(G)\times A(H)\longrightarrow A(G\times H)$, $G,H\in Ob(\mathcal{D})$, which are associative and bifunctorial, with an identity element $\epsilon_A \in A(1)$ (see \cite[Lemma 3]{BoucRom}). To define bilinear products from the given definition, set
\begin{equation}
    a\times b:=A(Inf_G^{G\times H})(a)A(Inf_H^{G\times H})(b)\in A(G\times H)
\end{equation}
for $a\in A(G)$ and $b\in A(H)$, $G,H\in Ob(\mathcal{D})$, taking $\epsilon_A:=1\in A(1)$. Departing from bilinear products, $A(G)$ is a unital associative algebra for the product
\begin{equation}
    ab:=A(Res(\Delta))(a\times b)\in A(G)
\end{equation}
for $a,b\in A(G)$, and $1:=A(Inf_1^G)(\epsilon_A) \in A(G)$.

Many examples of Green biset functors come from represention rings. It is a well-known fact that the ($k$-linearized) Burnside functor $B_k$ is an initial object in $\GreenDk$, and we write $e:B_k\longrightarrow A$ for the only morphism of Green biset functors from $B_k$ to $A$.

\begin{exx}
The \textit{commutant} $CA$ of a Green biset functor $A$ \cite[Definition 18]{BoucRom} is the subfunctor of $A$ given by 
$$CA(G)=\left\{a\in A(G)|a\times b=A(Iso(\tau_{H,G}))(b\times a),\forall b\in A(H),\forall H\in Ob(\mathcal{D})\right\}$$
for $G\in Ob(\mathcal{D})$. It is a Green biset subfunctor whose evaluations are contained in $Z(A(G))$ for every $G$, and $A$ is said to be \textit{commutative} if $CA=A$.
\end{exx}

\begin{exx}
The \textit{opposite} $A^{op}$ of a Green biset functor $A$ is just $A$ as a functor with the opposite multiplication for $A(G)$. It is a Green biset functor, and for every morphism $f:A\longrightarrow C\in \GreenDk$ we have a morphism $f^{op}:A^{op}\longrightarrow C^{op}$ with the same component arrows. The functor $\_^{op}$ is a self-inverse automorphism on $\GreenDk$, and $A^{op}= A$ if and only if $A$ is commutative.
\end{exx}

\begin{exx}
Given a Green biset functor $A$ and positive integers $n$ and $m$, the \textit{functor of $n\times m$-matrices over $A$} is the functor $M_{n\times m}(A)$ sending an object $G$ to $M_{n\times m}(A(G))$ (the set of $n\times m$-matrices with entries in $A(G)$), and sending a morphism $x\in B_k(H,G)$ to the $k$-linear transformation $[a_{i,j}]\mapsto[A(x)(a_{i,j})]$, $[a_{i,j}]\in M_{n\times m}(A(G))$. When $m=n$, $M_{n\times n}(A)$ is a Green biset functor for matrix multiplication, and if $n>1$, then $M_{n\times n}(A)$ is not commutative. 
\end{exx}

\section{Associated categories and linear functors}

The associated category of a Green biset functor $A$ was introduced by Bouc in \cite[Chapter 8]{BoucBook} as a generalization of the biset category. It plays an important role in the study of modules over $A$.

\begin{deff}[Bouc] \thlabel{AssoCat}
For a Green $\mathcal{D}$-biset functor over $k$, $A$, its \textit{associated category} $\mathcal{P}_A$ has the same objects as $\mathcal{D}$ and hom-sets $Hom_{\mathcal{P}_A}(G, H):=A(H\times G)$ for $G,H\in Ob(\mathcal{D})$, with bilinear composition defined by $\alpha\circ \beta:=A(K\times \overleftarrow{H}\times G)(\alpha\times \beta)$, for $\alpha \in A(K\times H)$ and $\beta \in A(H\times G)$, and identity $Id_G=A(\overrightarrow{G})(\epsilon_A)$ for $G$. 
\end{deff}

By parts 1 and 2 of \thref{totheleft}, the maps in (3) define a linear isomorphism $\overrightarrow{(\_)}:k\mathcal{D}\longrightarrow \mathcal{P}_{B_k}$ given as the identity in objects. We now give a generalization of the map $\widetilde{\_}:B(G)\longrightarrow B(G,G)$ presented in \cite[2.5.9]{BoucBook} to an arbitrary Green biset functor.

\begin{prop} \thlabel{map2end}
Let $A$ be a Green biset functor and $G\in Ob(\mathcal{D})$. The map 
$$\widetilde{\_}:A(G)\longrightarrow End_{\AsCatA}(G), a\mapsto \widetilde{a}:= A(Ind(\Delta))(a)$$
is an injective $k$-algebra homomorphism.
\end{prop}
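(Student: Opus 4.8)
The plan is to verify $k$-linearity first, then reduce the algebra-homomorphism property to isomorphisms of bisets, and finally establish injectivity by exhibiting a retraction. Linearity is immediate, since $\widetilde{\_}=A(Ind(\Delta))$ is the $k$-linear functor $A$ applied to a fixed arrow. For the multiplicative and unital properties, the strategy is to \emph{rewrite both sides in terms of $A$ applied to bisets}, using functoriality of $A$ and bifunctoriality of $\times$, so that each identity reduces to a biset isomorphism checkable by an explicit equivariant bijection.

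For multiplicativity I would use $ab=A(Res(\Delta))(a\times b)$ and $\alpha\circ\beta=A(K\times\overleftarrow{H}\times G)(\alpha\times\beta)$ to obtain, for all $a,b\in A(G)$,
\begin{align*}
\widetilde{ab}&=A(Ind(\Delta))A(Res(\Delta))(a\times b)=A\big(Ind(\Delta)\circ Res(\Delta)\big)(a\times b),\\
\widetilde{a}\circ\widetilde{b}&=A(G\times\overleftarrow{G}\times G)\big(A(Ind(\Delta)\times Ind(\Delta))(a\times b)\big)=A\big((G\times\overleftarrow{G}\times G)\circ(Ind(\Delta)\times Ind(\Delta))\big)(a\times b).
\end{align*}
Since the \emph{same} element $a\times b\in A(G\times G)$ appears on both sides, it suffices to prove the isomorphism of $(G\times G,G\times G)$-bisets
\[
Ind(\Delta)\circ Res(\Delta)\;\cong\;(G\times\overleftarrow{G}\times G)\circ(Ind(\Delta)\times Ind(\Delta)).
\]
I expect this to be the main obstacle. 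I would attack it by computing normal forms: both composites have underlying set a copy of $G^3$, and writing a general element of the left-hand side as $[(1,s),(p_1,p_2)]$ and of the right-hand side as $[(\kappa,\mu,\gamma),(1,1,1,1)]$, the assignment $(s,p_1,p_2)\mapsto(\kappa,\mu,\gamma)=(p_1,\,p_1^{-1}p_2,\,sp_2)$ should be the required bijection; the work lies in checking that it intertwines the two left $G\times G$-actions and the two right $G\times G$-actions.

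Preservation of the unit is a smaller instance of the same idea. As $1=A(Inf_1^G)(\epsilon_A)$ and $Id_G=A(\overrightarrow{G})(\epsilon_A)$, functoriality reduces $\widetilde{1}=Id_G$ to the isomorphism of $(G\times G,1)$-bisets $Ind(\Delta)\circ Inf_1^G\cong\overrightarrow{G}$, which I would exhibit directly by sending the class of $(u,v)$ to $uv^{-1}$ (one may also route this through part 6 of \thref{Rflxns}).

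Finally, for injectivity I would construct a $k$-linear retraction of $\widetilde{\_}$. Let $Z$ be $G$ regarded as a $(G,G\times G)$-biset via $g\cdot x\cdot(g_1,g_2):=gxg_1$; since $\mathcal{D}$ is full and closed under products, $G\times G\in Ob(\mathcal{D})$ and $Z$ defines an arrow in $\mathcal{D}$, so $A(Z):A(G\times G)\longrightarrow A(G)$ is available. Because the right $G\times G$-action on $Z$ factors through the first projection $\pi_1$ and $\pi_1\circ\Delta=id_G$, I expect $Z\circ Ind(\Delta)\cong Id_G$ as $(G,G)$-bisets: a short normal-form computation shows the second coordinate coming from $Ind(\Delta)$ is absorbed, leaving the regular biset $G$. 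Then $A(Z)(\widetilde{a})=A(Z\circ Ind(\Delta))(a)=a$ for all $a\in A(G)$, so $A(Z)\circ\widetilde{\_}=id_{A(G)}$ and $\widetilde{\_}$ is (split) injective.
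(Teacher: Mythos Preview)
Your proposal is correct and follows essentially the same route as the paper: both reduce multiplicativity to the biset isomorphism $Ind(\Delta)\circ Res(\Delta)\cong(G\times\overleftarrow{G}\times G)\circ Ind(\Delta\times\Delta)$ (your $Ind(\Delta)\times Ind(\Delta)$ is the same biset), handle the unit via $Ind(\Delta)\circ Inf_1^G\cong\overrightarrow{G}$, and obtain injectivity by a retraction---your biset $Z$ is exactly the paper's $Ind(p_1)$ for the first projection $p_1:G\times G\to G$. The only cosmetic difference is the choice of normal form in the explicit bijection (you normalize the right-hand factor, the paper the left).
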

\begin{proof}
Note that $\widetilde{ab}=A(Ind(\Delta)\circ Res(\Delta))(a\times b)$, and
$$\widetilde{a}\circ \widetilde{b}=A((G\times \overleftarrow{G}\times G)\circ Ind(\Delta\times \Delta))(a\times b),$$
for all $a,b\in A(G)$. The right-hand sides of both expressions are equal since
$$Ind(\Delta)\circ Res(\Delta) \longrightarrow (G\times \overleftarrow{G}\times G) \circ Ind(\Delta\times \Delta)$$
$$[(g_1,g_2),(g_3,g_4)]\mapsto [(1,1,1),(g_1g_3,g_3,g_4,g_2g_4)]$$
is an isomorphism of $(G\times G,G\times G)$-bisets, and 
$$\widetilde{1}=A(Ind(\Delta)\circ Inf_1^{G})(\epsilon_A)=A(\overrightarrow{G})(\epsilon_A)=Id_G.$$ 
Since the map $\widetilde{\_}$ is $k$-linear, then it is a $k$-algebra homomorphism. Let $p_1:G\times G \longrightarrow G$ be the projection to the first factor, then the map $A(Ind(p_1))$ is a retraction of $A(Ind(\Delta))$ since $p_1\circ \Delta=Id_G$. 
\end{proof}

\begin{lemma}
Let $A$ be a Green biset functor. Then for all $a,b\in A(G)$, the following holds:
\begin{enumerate}
    \item $A(\widetilde{a})(b)=ab$,
    \item $\widetilde{a}\circ (b\times \epsilon_A)=ab\times \epsilon_A$.
\end{enumerate}
\end{lemma}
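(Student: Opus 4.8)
The plan is to read the symbol $A(\widetilde{a})$ in part (1) through the standard $\AsCatA$-module structure on $A$: a morphism $\phi\in Hom_{\AsCatA}(G,H)=A(H\times G)$ sends $b\in A(G)$ to $A(\phi)(b):=A(H\times\overleftarrow{G})(\phi\times b)\in A(H)$. This is compatible with the composition of \thref{AssoCat} — it is exactly post-composition once $A(G)$ is identified with $Hom_{\AsCatA}(1,G)$ via $b\mapsto b\times\epsilon_A$ — and it is what $A(\widetilde{a})$ refers to. Granting this, I would prove (1) by reducing it to an explicit biset isomorphism, and then deduce (2) from (1) by a formal manipulation of external products.

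For (1), I would expand $\widetilde{a}=A(Ind(\Delta))(a)$ and use bifunctoriality of the external product to merge the two functor applications: since $\widetilde{a}\times b=A(Ind(\Delta)\times Id_G)(a\times b)$, functoriality of $A$ gives
$$A(\widetilde{a})(b)=A\bigl((G\times\overleftarrow{G})\circ(Ind(\Delta)\times Id_G)\bigr)(a\times b).$$
The statement then reduces to the isomorphism of $(G,G\times G)$-bisets
$$(G\times\overleftarrow{G})\circ(Ind(\Delta)\times Id_G)\cong Res(\Delta),$$
after which $A(\widetilde{a})(b)=A(Res(\Delta))(a\times b)=ab$ by the definition of the multiplication on $A(G)$. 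I would exhibit this isomorphism by hand: on underlying sets it sends the class $[(u,v),((y_1,y_2),y_3)]$ to $(uy_1,\,uy_1y_2^{-1}vy_3)\in G\times G$, and I would verify that this is well defined, bijective (writing the inverse explicitly), and equivariant for the left $G$- and right $G\times G$-actions, in the style of the isomorphisms in \thref{Rflxns} and \thref{map2end}.

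For (2), I would apply the composition formula of \thref{AssoCat} with source object $1$, obtaining $\widetilde{a}\circ(b\times\epsilon_A)=A(G\times\overleftarrow{G}\times 1)(\widetilde{a}\times(b\times\epsilon_A))$. Associativity of the external product rewrites $\widetilde{a}\times(b\times\epsilon_A)$ as $(\widetilde{a}\times b)\times\epsilon_A$, and since $G\times\overleftarrow{G}\times 1=(G\times\overleftarrow{G})\times Id_1$, bifunctoriality of $\times$ splits the application as $A(G\times\overleftarrow{G})(\widetilde{a}\times b)\times A(Id_1)(\epsilon_A)$. The first factor equals $ab$ by part (1) and the second equals $\epsilon_A$, so $\widetilde{a}\circ(b\times\epsilon_A)=ab\times\epsilon_A$; thus (2) is a formal consequence of (1).

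The main obstacle is the biset isomorphism used in (1). The bookkeeping is delicate because $\overleftarrow{G}$ carries the inverse-twisted right action $x\cdot(g_1,g_2)=g_1^{-1}xg_2$, so the naive "diagonal" guess fails; the correct map is forced to carry the extra factor $uy_1$ into the second coordinate, which is precisely what reproduces the diagonal left action $(w_1,w_2)\mapsto(gw_1,gw_2)$ of $Res(\Delta)$ (the second invariant $y_2^{-1}vy_3$ by itself is fixed by the left $G$-action). Once this isomorphism and its inverse are written out, everything else is routine functoriality.
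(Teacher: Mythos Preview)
Your proposal is correct and follows essentially the same route as the paper for part (1): both reduce to the biset isomorphism $(G\times\overleftarrow{G})\circ(Ind(\Delta)\times G)\cong Res(\Delta)$, though the paper obtains it by invoking the Mackey formula to see transitivity and then computing the stabilizer of $[(1,1),((1,1),1)]$ rather than writing down your explicit map. For part (2) the paper simply says it ``follows by a similar argument'' (i.e., another direct biset computation), whereas your deduction of (2) from (1) via bifunctoriality of $\times$ is a clean shortcut.
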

\begin{proof}
To prove part (1), observe that
$$A(\widetilde{a})(b)=A((G\times \overleftarrow{G})\circ (Ind(\Delta)\times G))(a\times b).$$

Using the Mackey formula for bisets, it follows easily that $(G\times \overleftarrow{G})\circ (Ind(\Delta)\times G)$ is transitive. The stabilizer of $[(1,1),((1,1),1)]$ is the graphic of $\Delta$, therefore
$$(G\times \overleftarrow{G})\circ (Ind(\Delta)\times G)\cong Res(\Delta),$$
implying that $A(\widetilde{a})(b)=ab$. Part (2) follows by a similar argument.
\end{proof}

For $H,G\in Ob(\mathcal{D})$ and $x\in B_k(H,G)$, we set
\begin{align}
    \E_A(x)(\alpha)=e_{H\times G}(\overrightarrow{x})\circ \alpha\circ e_{G\times H}(\overrightarrow{x^{op}})
\end{align}
for $\alpha\in End_{\AsCatA}(G)$. The map $\E(x):End_{\AsCatA}(G) \longrightarrow End_{\AsCatA}(H)$ is clearly $k$-linear, and it is straightforward that $\E_A(\text{Id}_G)=Id_{End_{\AsCatA}(G)}$ and $\E_A(y\circ x)=\E_A(y)\circ \E_A(x)$ for composable morphisms $y$ and $x$ in $k\mathcal{D}$. Setting $\E_A(G)=End_{\AsCatA}(G)$ for $G\in Ob(\mathcal{D})$, we have a functor $\E_A:k\mathcal{D}\longrightarrow {_k}Mod$ that we call the \textit{functor of double algebras of $A$}. Unfortunately, $\E_A$ is not a biset functor, as it is not linear on morphisms.

\begin{lemma}
Let $A$ be a Green biset functor. Then for every group homomorphism $\phi:G\longrightarrow H$ with $G,H\in Ob(\mathcal{D})$ and $a\in A(G)$, we have
$$\widetilde{A(Ind(\phi))(a)}=\E_A(Ind(\phi))(\widetilde{a})).$$
\end{lemma}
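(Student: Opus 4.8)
The plan is to rewrite both sides as $A(\sigma)(a)$ for explicit bisets $\sigma\in B(H\times H,G)$ and then to identify those bisets. For the left-hand side this is immediate: since $\widetilde{\_}$ is $A(Ind(\Delta_H))$ and $A$ is a functor on $k\mathcal{D}$, functoriality of induction gives $\widetilde{A(Ind(\phi))(a)}=A(Ind(\Delta_H)\circ Ind(\phi))(a)=A(Ind(\Delta_H\circ\phi))(a)$. Thus it suffices to prove $\E_A(Ind(\phi))(\widetilde{a})=A(Ind(\Delta_H\circ\phi))(a)$.

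For the right-hand side I would unfold $\E_A(Ind(\phi))(\widetilde{a})=e_{H\times G}(\overrightarrow{Ind(\phi)})\circ\widetilde{a}\circ e_{G\times H}(\overrightarrow{Ind(\phi)^{op}})$ and evaluate the two compositions by means of two straightening identities. Namely, using that $e$ is a morphism of Green biset functors (so $e_{M\times N}(\overrightarrow{x})=A(\overrightarrow{x})(\epsilon_A)$), I claim that for every $x\in B_k(H,G)$ one has
$$e_{H\times G}(\overrightarrow{x})\circ\gamma=A(x\times Id_L)(\gamma)\qquad\text{and}\qquad\delta\circ e_{G\times H}(\overrightarrow{x^{op}})=A(Id_K\times x)(\delta)$$
for all $\gamma\in A(G\times L)=Hom_{\AsCatA}(L,G)$ and $\delta\in A(K\times G)=Hom_{\AsCatA}(G,K)$, where $x\times Id_L$ and $Id_K\times x$ denote external products of $x$ with the identity bisets. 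Granting these, apply the second identity with $x=Ind(\phi)$ and $K=G$ to get $\widetilde{a}\circ e_{G\times H}(\overrightarrow{Ind(\phi)^{op}})=A(Id_G\times Ind(\phi))(\widetilde{a})$, and then the first with $L=H$ to obtain $\E_A(Ind(\phi))(\widetilde{a})=A\big((Ind(\phi)\times Id_H)\circ(Id_G\times Ind(\phi))\big)(\widetilde{a})$.

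From here I would finish purely functorially. By bifunctoriality of the external product (the interchange law in $\mathcal{C}$), $(Ind(\phi)\times Id_H)\circ(Id_G\times Ind(\phi))=Ind(\phi)\times Ind(\phi)=Ind(\phi\times\phi)$, so substituting $\widetilde{a}=A(Ind(\Delta_G))(a)$ and using functoriality of induction yields
$$\E_A(Ind(\phi))(\widetilde{a})=A\big(Ind(\phi\times\phi)\circ Ind(\Delta_G)\big)(a)=A\big(Ind((\phi\times\phi)\circ\Delta_G)\big)(a).$$
Since $(\phi\times\phi)\circ\Delta_G=\Delta_H\circ\phi$ as homomorphisms $G\to H\times H$, this equals $A(Ind(\Delta_H\circ\phi))(a)$, which matches the left-hand side and finishes the proof.

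The main obstacle is establishing the two straightening identities, which carry all the genuine biset combinatorics. After pushing the $\epsilon_A$'s through (using that $\epsilon_A$ is a unit for $\times$ and bifunctoriality of $\times$), each collapses to a single isomorphism of bisets: for the first, $(H\times\overleftarrow{G}\times L)\circ(\overrightarrow{x}\times Id_{G\times L})\cong x\times Id_L$ after the canonical identification $1\times(G\times L)\cong G\times L$, and similarly for the second with $\overrightarrow{x^{op}}$. These are proved by the explicit-bijection technique of \thref{totheleft} and \thref{Rflxns}: the first is handled as in \thref{totheleft}(1), while the second additionally uses \thref{totheleft}(3) to replace $\overrightarrow{x^{op}}$ by $Iso(\tau_{H,G})\circ\overrightarrow{x}$ together with the reflections of \thref{Rflxns} to reorganize the $Iso(\tau)$ and $\overleftarrow{G}$ factors; the identity $\widetilde{a}\circ(b\times\epsilon_A)=ab\times\epsilon_A$ recorded above already exhibits this kind of straightening. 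A less modular alternative would expand the entire triple composition into one composite biset and exhibit a single explicit isomorphism onto $Ind(\Delta_H\circ\phi)$, but I expect the two-sided straightening identities to be cleaner and reusable.
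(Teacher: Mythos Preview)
Your argument is correct and complete in outline; the straightening identities you isolate do hold (they amount to the fact that post- and pre-composition in $\AsCatA$ with $\mathcal{P}_e(\overrightarrow{x})$ realizes the biset action on the shifted functors $A(\_\times L)$ and $A(K\times\_)$), and from there the finish via the interchange law and the naturality square $(\phi\times\phi)\circ\Delta_G=\Delta_H\circ\phi$ is clean. The paper, by contrast, does not isolate these identities: it expands the triple composition $e_{H\times G}(\overrightarrow{Ind(\phi)})\circ\widetilde{a}\circ e_{G\times H}(\overrightarrow{Res(\phi)})$ in one step to obtain
\[
A\Bigl((H\times\overleftarrow{G}\times\overleftarrow{G}\times H)\circ\bigl(\overrightarrow{Ind(\phi)}\times Ind(\Delta_G)\times\overrightarrow{Res(\phi)}\bigr)\Bigr)(a),
\]
and then writes down a single explicit map from $Ind(\Delta_H)\circ Ind(\phi)$ to this composite biset, checking surjectivity and injectivity from left-transitivity and left-freeness of $Ind(\phi)$. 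This is exactly the ``less modular alternative'' you anticipated in your last paragraph. Your route trades one ad hoc bijection for two general lemmas plus purely formal manipulations; it is longer to set up but yields reusable statements (indeed, the identity $\widetilde{a}\circ(b\times\epsilon_A)=ab\times\epsilon_A$ in the paper is a special case of your second straightening identity), whereas the paper's single explicit isomorphism is shorter for this one lemma but does not factor through anything reusable.
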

\begin{proof}
Observe that
\begin{align*}
    \E_A(Ind(\phi))(\widetilde{a}) &=e_{H\times G}\left(\overrightarrow{Ind(\phi)}\right)\circ \widetilde{a}\circ e_{G\times H}\left(\overrightarrow{Ind(\phi)^{op}}\right)\\
    &=e_{H\times G}\left(\overrightarrow{Ind(\phi)}\right)\circ \widetilde{a}\circ e_{G\times H}\left(\overrightarrow{Res(\phi)}\right)\\
    &=A\left(\left(H\times \overleftarrow{G}\times \overleftarrow{G}\times H\right)\circ \left(\overrightarrow{Ind(\phi)}\times Ind(\Delta_G)\times \overrightarrow{Res(\phi)} \right)\right)(a),
\end{align*}
while $\widetilde{A(Ind(\phi))(a)}=A\left(Ind(\Delta_H)\circ Ind(\phi)\right)(a)$. Then the map
$$Ind(\Delta_H)\circ Ind(\phi)\longrightarrow \left(H\times \overleftarrow{G}\times \overleftarrow{G}\times H\right)\circ \left(\overrightarrow{Ind(\phi)}\times Ind(\Delta_G)\times \overrightarrow{Res(\phi)} \right),$$
$$[(h_1,h_2),h_3]\mapsto [(h_1,1,1,h_2), (h_3,(1,1),h_3^{-1})],$$
is morphism of $(H,G)$-bisets. It is surjective because $Ind(\phi)$ is left-transitive, and it is injective because $Ind(\phi)$ is left-free, thus it is an isomorphism.
\end{proof}

Every morphism of Green biset functors $f:A\longrightarrow C$ induces a linear functor $\mathcal{P}_f:\mathcal{P}_A\longrightarrow \mathcal{P}_C$ given by $\mathcal{P}_f(G)=G$ for $G\in Ob(\mathcal{D})$ and $\mathcal{P}_f(\alpha)=f_{H\times G}(\alpha)$ for $\alpha \in \AsCatA(G,H)$, and if $g:C\longrightarrow D$ is another morphism in $\GreenDk$, we have that $\mathcal{P}_{g\circ f}=\mathcal{P}_g\circ \mathcal{P}_f$ and $\mathcal{P}_{Id_A}=Id_{\AsCatA}$ (see \cite[Section 3]{Gar2}). Given a linear functor $F:\mathcal{P}_A\longrightarrow \mathcal{P}_C$, we may now wonder whether there is a morphism $f:A\longrightarrow C$ such that $F=\mathcal{P}_f$.

\begin{lemma}\thlabel{LiftFun}
Let $A$ and $C$ be Green biset functors and let $F: \mathcal{P}_A\longrightarrow \mathcal{P}_C$ be a $k$-linear functor. Then $F$ is induced by a morphism of Green biset functors if and only if $F(G)=G$ for all $G$, and $F(A(x)(\alpha))=C(x)( F(\alpha))$ for all $x\in B_k(L\times K,H\times G)$ and all $\alpha \in A(H\times G)$.
\end{lemma}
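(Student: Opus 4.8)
The forward direction is immediate from naturality. If $F=\mathcal{P}_f$ for a morphism $f\colon A\to C$ of Green biset functors, then $F(G)=\mathcal{P}_f(G)=G$, and for $x\in B_k(L\times K,H\times G)$ and $\alpha\in A(H\times G)=\AsCatA(G,H)$ the naturality of the underlying biset functor morphism $f$ gives
$$F(A(x)(\alpha))=f_{L\times K}(A(x)(\alpha))=C(x)(f_{H\times G}(\alpha))=C(x)(F(\alpha)),$$
so both conditions hold.

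For the converse I would first realise $A(G)$ as a hom-set of $\AsCatA$, since it is not literally one. Let $u_G\in B_k(G\times 1,G)$ be the isomorphism biset identifying $G$ with $G\times 1$, so that $A(u_G)\colon A(G)\xrightarrow{\sim}A(G\times1)=\AsCatA(1,G)$, and set
$$f_G:=C(u_G^{-1})\circ F|_{\AsCatA(1,G)}\circ A(u_G)\colon A(G)\longrightarrow C(G),$$
which makes sense because $F(1)=1$, $F(G)=G$ by (1). Being a composite of $k$-linear maps, each $f_G$ is $k$-linear, so it remains to check three things: that $f=(f_G)_G$ is natural, that each $f_G$ is a $k$-algebra homomorphism, and that $F=\mathcal{P}_f$.

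Naturality reduces to condition (2): given $y\in B_k(H,G)$, view it as a biset $y'\in B_k(H\times1,G\times1)$ through $u_G,u_H$, so that $y'\circ u_G\cong u_H\circ y$ (a routine biset isomorphism). Applying condition (2) to $x=y'$ (with source grouped as $G\times1$ and target as $H\times1$) and unwinding the definition of $f$ yields $f_H\circ A(y)=C(y)\circ f_G$. The identity $F=\mathcal{P}_f$ is similar: for $\alpha\in\AsCatA(G,H)=A(H\times G)$, condition (2) applied to $x=u_{H\times G}$ gives $F(A(u_{H\times G})(\alpha))=C(u_{H\times G})(F(\alpha))$, and substituting this into the definition of $f_{H\times G}(\alpha)$ collapses the two iso bisets, leaving $f_{H\times G}(\alpha)=F(\alpha)=\mathcal{P}_f(\alpha)$; on objects $\mathcal{P}_f(G)=G=F(G)$ by (1).

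The step I expect to be the main obstacle is showing that each $f_G$ is multiplicative and unital, because this is where functoriality of $F$ (not merely condition (2)) must enter. The plan is to route through the tilde map of \thref{map2end}. Applying condition (2) to $Ind(\Delta_G)$, suitably precomposed with $u_G^{-1}$ so that its source is the product $G\times1$, and using the definition of $f_G$, I would establish the key relation
$$F(\widetilde{a}^{A})=\widetilde{f_G(a)}^{C}\qquad(a\in A(G)),$$
where $\widetilde{\_}^{A}$ and $\widetilde{\_}^{C}$ denote the tilde maps of $A$ and $C$. Since $F$ is a functor and, by \thref{map2end}, each tilde map is an injective $k$-algebra homomorphism with $\widetilde{ab}=\widetilde{a}\circ\widetilde{b}$ and $\widetilde{1}=Id_G$, this relation forces the result: from $F(\widetilde{ab}^{A})=F(\widetilde{a}^{A})\circ F(\widetilde{b}^{A})$ I obtain $\widetilde{f_G(ab)}^{C}=\widetilde{f_G(a)f_G(b)}^{C}$, and from $F(Id_G)=Id_G$ I obtain $\widetilde{f_G(1_A)}^{C}=\widetilde{1_C}^{C}$; injectivity of the tilde map of $C$ then gives $f_G(ab)=f_G(a)f_G(b)$ and $f_G(1_A)=1_C$. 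Combined with naturality, this exhibits $f$ as a morphism of Green biset functors with $\mathcal{P}_f=F$.
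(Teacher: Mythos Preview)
Your argument is correct and follows essentially the same route as the paper: both identify $A(G)$ with $\AsCatA(1,G)$ via the isomorphism $G\cong G\times 1$, define $f_G$ by transporting $F$ along this identification, and obtain multiplicativity by expressing the product through the tilde map of \thref{map2end} and invoking functoriality of $F$. The only cosmetic difference is packaging: the paper phrases the identification via the shifted functors $A_1$, $C_1$ and cites \cite[Prop.~4.2]{Gar2} to know that $A(Inf_{\_}^{\_\times 1})$ and $C(Res^{\_\times 1}_{\_})$ are morphisms of Green biset functors, whereas you carry the iso bisets $u_G$ explicitly throughout; your direct approach is self-contained but slightly more bookkeeping, while the paper's shortcut trades that for an external reference.
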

\begin{proof}
The implication is clear. For the converse, recall from \cite[Prop. 4.2]{Gar2} that $A\left(Inf_{\_}^{\_\times 1}\right):A\longrightarrow A_1$ and $C\left(Res^{\_\times 1}_{\_}\right):C_1\longrightarrow C$ are morphisms of Green biset functors, where $A_1$ and $C_1$ are the shifted functors at the trivial group. The maps $f_{0,G}:A(G\times 1)\longrightarrow C(G\times 1), \alpha \mapsto F(\alpha)$ for $G\in Ob(\mathcal{D})$ clearly define a morphism of biset functors $f_0:A_1\longrightarrow C_1$. If $\alpha,\beta\in A_1(G)$, then $\alpha=a\times \epsilon$ and $\beta=b\times \epsilon$ for unique elements $a,b\in A(G)$, and $\alpha\beta=(ab)\times \epsilon=\widetilde{a}\circ \beta$. Then $
f_{0,G}(\alpha\beta) = F(\widetilde{a}\circ \beta)=F(\widetilde{a})\circ F(\beta)=F(\alpha)F(\beta)=f_{0,G}(\alpha)f_{0,G}(\beta)$, and noting that $\overrightarrow{G\times 1}\cong \overrightarrow{G\times 1}\circ \overleftarrow{ 1}\circ \overrightarrow{1}$, it is immediate that $f_{0,G}(1)=1$, hence $f_0$ is a morphism of Green biset functors. Taking $f=C\left(Res^{\_\times 1}_{\_}\right)\circ f_0\circ A\left(Inf_{\_}^{\_\times 1}\right):A\longrightarrow C$, we have that $P_f=F$.
\end{proof}

The opposite category of $\AsCatA$ can be realized as the associated category of $A^{op}$. 

\begin{prop}\thlabel{OppAssCat}
$\mathcal{P}_{A^{op}}\cong \mathcal{P}_A^{op}$ as linear categories.
\end{prop}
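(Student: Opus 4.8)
The plan is to exhibit an explicit isomorphism of linear categories $\Phi\colon\mathcal{P}_{A^{op}}\longrightarrow\mathcal{P}_A^{op}$ that is the identity on objects. On hom-sets the two categories differ only by a transposition of factors: $Hom_{\mathcal{P}_{A^{op}}}(G,H)=A^{op}(H\times G)=A(H\times G)$, whereas $Hom_{\mathcal{P}_A^{op}}(G,H)=Hom_{\mathcal{P}_A}(H,G)=A(G\times H)$. I would therefore set $\Phi_{G,H}:=A(Iso(\tau_{H,G}))\colon A(H\times G)\to A(G\times H)$. Since $\tau_{H,G}$ is an isomorphism, each $\Phi_{G,H}$ is a $k$-linear isomorphism with inverse $A(Iso(\tau_{G,H}))$; hence, once $\Phi$ is shown to be a functor, it is automatically an isomorphism of linear categories, and only the compatibility with identities and composition needs to be checked. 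Preservation of identities is immediate: as $\epsilon_{A^{op}}=\epsilon_A$ and the underlying functor of $A^{op}$ equals $A$, \thref{Rflxns}(6) gives $\Phi(Id_G)=A(Iso(\tau_{G,G})\circ\overrightarrow{G})(\epsilon_A)=A(\overrightarrow{G})(\epsilon_A)=Id_G$.

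The algebraic bridge that makes the composition check work is the following observation: for $a\in A^{op}(P)$ and $b\in A^{op}(Q)$ one has $A(Iso(\tau_{P,Q}))(a\times^{op}b)=b\times a$, where $\times$ denotes the external product of $A$ and $\times^{op}$ that of $A^{op}$. Indeed, because $\tau_{P,Q}$ is an isomorphism, $Iso(\tau_{P,Q})\cong Res(\tau_{P,Q}^{-1})$, so by axiom (1) of \thref{GBF1} the map $A(Iso(\tau_{P,Q}))$ is a homomorphism of algebras. Expanding $a\times^{op}b$ through formula (5) applied to $A^{op}$ (whose product reverses the two factors) and then transporting the two inflations across $\tau_{P,Q}$ by means of \thref{Rflxns}(3) yields the claim; the point is that the opposite product is exactly what compensates for the order reversal introduced by the transposition.

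For the composition, take $\beta\in Hom_{\mathcal{P}_{A^{op}}}(G,H)=A(H\times G)$ and $\alpha\in Hom_{\mathcal{P}_{A^{op}}}(H,K)=A(K\times H)$, and write $X=K\times H$, $Y=H\times G$. Unwinding $\circ_{\mathcal{P}_A^{op}}$ as the reverse of $\circ_{\mathcal{P}_A}$, using bifunctoriality of $\times$, and applying the bridge to rewrite $\alpha\times^{op}\beta=A(Iso(\tau_{Y,X}))(\beta\times\alpha)$, the identity $\Phi(\alpha\circ_{\mathcal{P}_{A^{op}}}\beta)=\Phi(\alpha)\circ_{\mathcal{P}_A^{op}}\Phi(\beta)$ reduces to the single biset isomorphism
\[
Iso(\tau_{K,G})\circ(K\times\overleftarrow{H}\times G)\circ Iso(\tau_{Y,X})\cong(G\times\overleftarrow{H}\times K)\circ\bigl(Iso(\tau_{H,G})\times Iso(\tau_{K,H})\bigr)
\]
of $(G\times K,\,H\times G\times K\times H)$-bisets. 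This is where \thref{Rflxns}(5) does the work: specializing it with $G$ and $K$ interchanged gives $Iso(\tau_{K,G})\circ(K\times\overleftarrow{H}\times G)\cong(G\times\overleftarrow{H}\times K)\circ Iso(\rho_{K,H,H,G})$, so the left-hand side becomes $(G\times\overleftarrow{H}\times K)\circ Iso(\rho_{K,H,H,G}\circ\tau_{Y,X})$.

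The main obstacle is the purely combinatorial bookkeeping with the reordering isomorphisms: one must verify that $\rho_{K,H,H,G}\circ\tau_{Y,X}$ and $\tau_{H,G}\times\tau_{K,H}$ are the same group isomorphism $H\times G\times K\times H\to G\times H\times H\times K$ (both sending $(h_1,g,k,h_2)\mapsto(g,h_1,h_2,k)$), and then invoke that induction along isomorphisms is monoidal, i.e.\ $Iso(\tau_{H,G}\times\tau_{K,H})\cong Iso(\tau_{H,G})\times Iso(\tau_{K,H})$. Cancelling the common right factor $Iso(\rho_{K,H,H,G}\circ\tau_{Y,X})=Iso(\tau_{H,G}\times\tau_{K,H})$ then identifies the two sides, completing the verification that $\Phi$ is a functor and hence the desired isomorphism $\mathcal{P}_{A^{op}}\cong\mathcal{P}_A^{op}$.
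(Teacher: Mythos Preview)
Your proof is correct and follows essentially the same approach as the paper. You define the same isomorphism $\Phi=T$ via $A(Iso(\tau_{H,G}))$ on hom-sets, derive the same identity $a\times^{op}b=A(Iso(\tau_{H,G}))(b\times a)$ for the opposite external product, and reduce the compatibility with composition to the same biset isomorphism (the paper passes through \thref{Rflxns}(2) and (5) implicitly, whereas you invoke (5) and the coordinate check explicitly); your treatment is simply more detailed in citing the individual parts of \thref{Rflxns} and in spelling out the permutation bookkeeping.
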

\begin{proof}
Writing  $*$ for the multiplication in $A(G)^{op}$ and $\mathring{\times}$ for the bilinear product of $A^{op}$, we have
\begin{align*}
    a \mathring{\times} b &= A^{op}(Inf_G^{G\times H})(a) * A^{op}(Inf^{G\times H}_H)(b)\\
    &=A(Inf^{G\times H}_H)(b) A(Inf_G^{G\times H})(a)\\
    &=A(Iso(\tau_{H,G}))(A(Inf^{H\times G}_H)(b) A(Inf_G^{H\times G})(a))\\
    &=A(Iso(\tau_{H,G}))(b\times a)
\end{align*}
for all $a\in A^{op}(G)$ and $b\in A^{op}(H)$. If $\diamond$  denotes the composition in $\mathcal{P}_{A^{op}}$, then
\begin{align*}
    \alpha\diamond \beta &=A^{op}(K\times \overleftarrow{H}\times G)(\alpha\mathring{\times}\beta)\\
    &=A((K\times \overleftarrow{H}\times G)\circ Iso(\tau_{H\times G,K\times H}))(\beta\times \alpha)\\
    &=A(Iso(\tau_{G,K})\circ (G\times \overleftarrow{H}\times K))(A(Iso(\tau_{H,G}))(\beta)\times A(Iso(\tau_{K,H}))(\alpha))\\
    &=A(Iso(\tau_{G,K}))(A(Iso(\tau_{H,G}))(\beta)\circ A(Iso(\tau_{K,H}))(\alpha))
\end{align*}
for $\alpha \in A^{op}(K\times H)$ and $\beta \in A^{op}(H\times G)$, and thus we have that
$$A(Iso(\tau_{K,G}))(\alpha\diamond \beta)=A(Iso(\tau_{H,G}))(\beta)\circ A(Iso(\tau_{K,H}))(\alpha).$$
Setting $T(G):=G$ for $G\in Ob(\mathcal{D})$, the maps 
$$T_{G,H}:=A(Iso(\tau_{H,G})):\mathcal{P}_{A^{op}}(G, H)\longrightarrow \mathcal{P}_A^{op}(G,H)$$ 
define an isomorphism of linear categories $T:\mathcal{P}_{A^{op}}\longrightarrow \mathcal{P}_A^{op}$.
\end{proof}

\section{Anti-involutions}

\begin{deff}\thlabel{invol}
An \textit{anti-involution} on a Green $\mathcal{D}$-biset functor $A$ is an endomorphism $\_^{\star}:A\longrightarrow A$ as biset functor such that $a^{\star\star}=a$ and $(ab)^{\star}= b^{\star}a^{\star}$ for all $a,b\in A(G)$ and all $G\in Ob(\mathcal{D})$. 
\end{deff}

It is clear that $\invol$ is an automorphism, and it can be easily verified that $1^{\star}=1\in A(G)$ for all $G$, $a^{\star}\times b=(a\times b^{\star})^{\star}$ and $a\times b^{\star}=(a^{\star}\times b)^{\star}$ for all $a\in A(G)$ and $b\in A(H)$. 

\begin{exx}
If $\F$ is a field, taking duals of finitely generated $\F G$-modules commutes with direct sums, tensor products over $\F$ and admissible biset operations, therefore it induces anti-involutions $\_^*$ on the functor of linear $\F$-representations $\kRF$ if $char\;\F=0$ and the functor of trivial source modules $kpp_{\F}$ if $char\;\F=p>0$.
\end{exx}

\begin{exx}
If $A$ is a Green biset functor with an anti-involution $\_^{\star}$, then there is an anti-involution on $M_{n\times n}(A)$ given by $[a_{i,j}]^{\star}=[a_{i,j}^{\star}]^{T}$ (the $T$ stands for the transpose of a matrix) for $[a_{i,j}]\in M_{n\times n}(A)(G)$ and $G\in Ob(\mathcal{D})$. If $n>1$, then $[a_{i,j}]\mapsto [a_{i,j}]^T$ and $[a_{i,j}]\mapsto [a_{i,j}^{\star}]$ are not anti-involutions.
\end{exx}

\begin{prop}\thlabel{AntInvProps}
Let $A$ be a Green biset functor with an anti-involution $\invol$. Then $\invol$ can be restricted to an anti-involution on $CA$, and $\invol$ is an automorphism of $A$ as a Green biset functor if and only if $A$ is commutative. Moreover:
\begin{enumerate}
    \item The identity of $A$ is an anti-involution if and only if $A$ is commutative;
    \item The identity morphism is the only anti-involution on $B_k$;
    \item If $e:B_k\longrightarrow A$ is the initial morphism, then $\invol\circ e=e$.
\end{enumerate}
\end{prop}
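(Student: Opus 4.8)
The plan is to first isolate the external-product identity $(a\times b)^{\star}=a^{\star}\times b^{\star}$, which falls out of the already-noted relation $a^{\star}\times b=(a\times b^{\star})^{\star}$ by substituting $b^{\star}$ for $b$ and using $b^{\star\star}=b$. With this tool the restriction to $CA$ is a short computation. Given $a\in CA(G)$ and any $b\in A(H)$, I would apply the defining commutant equality to $b^{\star}$ in place of $b$, namely $a\times b^{\star}=A(Iso(\tau_{H,G}))(b^{\star}\times a)$, apply $\invol$ to both sides, pull $\invol$ through $A(Iso(\tau_{H,G}))$ using that $\invol$ is a biset-functor endomorphism, and rewrite both sides via the product identity to get $a^{\star}\times b=A(Iso(\tau_{H,G}))(b\times a^{\star})$, which is exactly the condition $a^{\star}\in CA(G)$. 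Since $CA$ is a Green biset subfunctor stable under $\invol$, the restriction automatically inherits $k$-linearity, naturality, involutivity and anti-multiplicativity, so it is an anti-involution on $CA$.

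For the equivalence ``$\invol$ is an automorphism of $A$ as a Green biset functor iff $A$ is commutative'' and for claim (1), the key observation is that $\invol$ (respectively the identity) is already a $k$-linear natural isomorphism fixing the units, so the sole obstruction to being a morphism of Green biset functors is multiplicativity of each component. For $\invol$ this demands $(ab)^{\star}=a^{\star}b^{\star}$, whereas the anti-involution axiom gives $(ab)^{\star}=b^{\star}a^{\star}$; as $\invol$ is a bijection on each $A(G)$, the two agree for all $a,b$ precisely when every $A(G)$ is commutative, i.e.\ $A$ is commutative. For the identity the axiom reduces to $ab=ba$, again equivalent to commutativity.

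Claims (2) and (3) both exploit the initiality of $B_k$ in $\GreenDk$. For (2): $B_k$ is commutative, so by the equivalence just proved every anti-involution on $B_k$ is an automorphism of $B_k$ as a Green biset functor; but initiality forces $End_{\GreenDk}(B_k)=\{Id_{B_k}\}$, so the anti-involution must be the identity, which is indeed an anti-involution by (1). For (3) I would verify that $\invol\circ e\colon B_k\to A$ is itself a morphism of Green biset functors and then invoke uniqueness of the initial morphism. It is clearly a unit-preserving natural transformation; for multiplicativity, $(\invol\circ e)_G(b_1b_2)=(e_G(b_1)e_G(b_2))^{\star}=e_G(b_2)^{\star}e_G(b_1)^{\star}$, and since $B_k$ is commutative the subalgebra $e_G(B_k(G))\subseteq A(G)$ is commutative, hence so is its image under the anti-automorphism $\invol$, giving $e_G(b_2)^{\star}e_G(b_1)^{\star}=e_G(b_1)^{\star}e_G(b_2)^{\star}=(\invol\circ e)_G(b_1)(\invol\circ e)_G(b_2)$. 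Thus $\invol\circ e$ is a Green biset functor morphism $B_k\to A$ and coincides with $e$ by initiality.

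The computations for $CA$ and for the two equivalences are routine once the product identity is isolated; the one genuinely delicate point, which I expect to be the main obstacle, is recognizing in (3) that $\invol$ carries the commutative subalgebra $e_G(B_k(G))$ to a commutative subalgebra, which is exactly what rescues multiplicativity of $\invol\circ e$ despite $\invol$ being an anti-homomorphism, together with the clean reduction of (2) to the triviality of $End_{\GreenDk}(B_k)$.
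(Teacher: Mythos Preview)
Your proposal is correct and, for the restriction to $CA$, the main equivalence, and items (1)--(2), it is essentially the paper's argument verbatim (the paper even uses the same chain $a^{\star}\times b=(a\times b^{\star})^{\star}=\bigl(A(Iso(\tau_{H,G}))(b^{\star}\times a)\bigr)^{\star}=A(Iso(\tau_{H,G}))(b\times a^{\star})$, and likewise deduces $ab=ba$ from $(ab)^{\star}=a^{\star}b^{\star}=(ba)^{\star}$).

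The only genuine difference is in (3). The paper argues structurally: since $CA$ is a Green biset subfunctor and $B_k$ is initial, the image of $e$ lies in $CA$; having already shown that $\invol$ restricts to $CA$ and that $CA$ is commutative, $\invol|_{CA}$ is a Green morphism, so $\invol\circ e$ is a Green morphism and hence equals $e$. You instead bypass $CA$ entirely and verify multiplicativity of $\invol\circ e$ by hand, using that $e_G(B_k(G))$ is a commutative subalgebra of $A(G)$ so that $e_G(b_2)^{\star}e_G(b_1)^{\star}=(e_G(b_1)e_G(b_2))^{\star}=(e_G(b_2)e_G(b_1))^{\star}=e_G(b_1)^{\star}e_G(b_2)^{\star}$. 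Your route is more elementary and self-contained; the paper's route is more conceptual and reuses the first assertion of the proposition, which is arguably tidier since it explains \emph{why} the obstruction vanishes rather than just computing it away.
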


\begin{proof}
If $a\in CA(G)$, then for all $b\in A(H)$, $H\in Ob(\mathcal{D})$, we have
$$a^{\star}\times b=(a\times b^{\star})^{\star}=\left(Iso(\tau_{H,G})(b^{\star}\times a)\right)^{\star}=Iso(\tau_{H,G})(b\times a^{\star}),$$ 
hence $a^{\star}\in CA(G)$, and so we can restrict $\invol$ to $CA$. If $A$ is commutative, then $(ab)^{\star}=a^{\star}b^{\star}$ for all $a,b\in A(G)$. Conversely, if $\invol$ is a morphism of Green biset functors, then $(ab)^{\star}=a^{\star}b^{\star}=(ba)^{\star}$ for all $a,b\in A(G)$, thus $ab=ba$ and so $A$ is commutative, proving 1, and now 2 follows easily. Finally, the image of the composite $\invol \circ e:B_k\longrightarrow A$ lies in $CA$, hence it is a morphism of Green biset functors, thus equal to $e$, proving 3.
\end{proof}

\begin{exx}
Let $\mathcal{D}=\mathcal{C}$ and $k=\Z$. Since $\RC$ is commutative, any anti-involution on it is either an automorphism of order $2$ or the identity. We already know the one given by taking duals, but there are uncountable many more. There is a monoid isomorphism from the Prüfer ring $\widehat{\Z}$ onto the endomorphisms of $\RC:grp^{op}\longrightarrow Ring$, sending a coherent sequence $\sigma = ([\sigma_n])_{n}$ to the natural transformation $\Psi^{\sigma}$ whose $G$-th component arrow is the Adams operation $\Psi^{\sigma_{exp(G)}}$ \cite{Bol1}. It is straightforward that the automorphisms of $\RC$ in $\GreenCZ$ are precisely those of $\RC:grp^{op}\longrightarrow Ring$, and that the units of order $2$ in $\widehat{\Z}$ are in bijection with non-empty sets of primes.
\end{exx}

\begin{theorem}
For a Green biset functor $A$, the following statements are equivalent:
\begin{enumerate}
    \item There is an anti-involution $\invol$ on $A$;
    \item There is an isomorphism of Green biset functors $\delta:A\longrightarrow A^{op}$ such that $\delta^{op}= \delta^{-1}$;
    \item There is a $k$-linear functor $\Dual:\AsCatA \longrightarrow \AsCatA^{op}$ which is the identity in objects, satisfying the following:
    \begin{enumerate}
    \item $(\Dual)^{op}\circ \Dual =Id_{\mathcal{P}_A}$,
    \item The diagram
    \begin{align}
        \xymatrix{
        \AsCatA(G,H) \ar[rr]^{T^{-1}\circ \Dual}\ar[d]_{A(x)} &&\mathcal{P}_{A^{op}}(G,H)\ar[d]^{A^{op}(x)}\\
     \AsCatA(K,L)\ar[rr]_{T^{-1}\circ \Dual} &&\mathcal{P}_{A^{op}}(K,L)
    }
    \end{align}
    commutes for all $x\in B_k(L\times K,H\times G)$ and all $G,H,K,L\in Ob(\mathcal{D})$.
\end{enumerate}
\end{enumerate}
\end{theorem}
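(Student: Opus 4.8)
The plan is to prove the theorem by establishing the cycle of implications (1) $\Rightarrow$ (2) $\Rightarrow$ (3) $\Rightarrow$ (1), leaning on the dictionary between Green biset functors and their associated categories already set up in Section 3. Throughout I will use \thref{OppAssCat}, which gives the isomorphism $T:\mathcal{P}_{A^{op}}\to\mathcal{P}_A^{op}$ with components $T_{G,H}=A(Iso(\tau_{H,G}))$, as the bridge that lets me translate conditions on $A^{op}$ into conditions on $\mathcal{P}_A$.

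For (1) $\Rightarrow$ (2), given an anti-involution $\invol$, I would set $\delta$ to have the same underlying component maps as $\invol$, that is $\delta_G=\invol\colon A(G)\to A^{op}(G)$. Since $(ab)^\star=b^\star a^\star$, the map $\delta_G$ reverses multiplication on $A(G)$ but respects the multiplication of $A^{op}(G)$, so each $\delta_G$ is a $k$-algebra homomorphism into $A^{op}(G)$; naturality with respect to biset operations is exactly the statement that $\invol$ is an endomorphism of biset functors, which it is by \thref{invol}. That $\delta$ is an isomorphism follows because $\invol$ is an automorphism. The relation $\delta^{op}=\delta^{-1}$ unwinds to $a^{\star\star}=a$: indeed $\delta^{op}$ has the same components as $\delta$ viewed as a map $A\to A^{op}$, and composing with $\delta$ recovers $\invol\circ\invol=\mathrm{id}$. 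For the reverse (2) $\Rightarrow$ (1), I would read this computation backwards: the components of an isomorphism $\delta$ with $\delta^{op}=\delta^{-1}$ furnish maps $A(G)\to A(G)$ that are algebra anti-homomorphisms and self-inverse, hence an anti-involution.

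For (2) $\Leftrightarrow$ (3), the idea is that a morphism of Green biset functors $\delta\colon A\to A^{op}$ induces the linear functor $\mathcal{P}_\delta\colon\mathcal{P}_A\to\mathcal{P}_{A^{op}}$, and composing with $T$ gives a functor $\mathcal{P}_A\to\mathcal{P}_A^{op}$; transporting back I set $\Dual:=T\circ\mathcal{P}_\delta$ as a functor $\AsCatA\to\AsCatA^{op}$. The naturality square (9) then becomes precisely the statement that $T^{-1}\circ\Dual=\mathcal{P}_\delta$ intertwines $A(x)$ and $A^{op}(x)$, which is the defining property of a morphism of Green biset functors expressed through \thref{LiftFun}: condition (3)(b) is exactly the hypothesis that allows a linear functor to be lifted to a morphism of Green biset functors. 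Condition (3)(a), namely $(\Dual)^{op}\circ\Dual=Id_{\mathcal{P}_A}$, corresponds under this dictionary to $\delta^{op}=\delta^{-1}$, using that $(\mathcal{P}_\delta)^{op}=\mathcal{P}_{\delta^{op}}$ and that $T$ interacts with the opposite-category involution in the expected way. The converse direction uses \thref{LiftFun} to produce, from a functor $\Dual$ satisfying (3)(b), a morphism $\delta\colon A\to A^{op}$ with $\mathcal{P}_\delta=T^{-1}\circ\Dual$, and then (3)(a) yields $\delta^{op}=\delta^{-1}$.

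The main obstacle I expect is bookkeeping in the (2) $\Leftrightarrow$ (3) equivalence: I must verify carefully that condition (3)(a) translates to $\delta^{op}=\delta^{-1}$ and not merely to $\delta$ being an isomorphism, and this requires tracking how the functor $T$ of \thref{OppAssCat} behaves under the opposite operation $\invol^{op}$ on functors. Concretely, I will need to check the compatibility $(T\circ\mathcal{P}_\delta)^{op}\circ(T\circ\mathcal{P}_\delta)=Id$ unwinds correctly, which hinges on the identity $\_^{op}\circ\_^{op}=Id_{\mathcal{C}}$ for bisets together with parts (1) and (4) of \thref{Rflxns} governing $Iso(\tau_{G,H})$. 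Once the functor $T$ is handled coherently with both the opposite multiplication and the opposite category, the remaining verifications are the routine diagram-chases packaged by \thref{LiftFun}.
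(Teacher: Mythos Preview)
Your proposal is correct and follows essentially the same approach as the paper: the equivalence (1)$\Leftrightarrow$(2) is immediate by taking $\delta$ to have the same components as $\invol$, and the equivalence (2)$\Leftrightarrow$(3) goes through $\Dual:=T\circ\mathcal{P}_\delta$ in one direction and \thref{LiftFun} in the other. The only notable difference is that the paper handles the bookkeeping for condition (3)(a) by observing, via naturality of $\delta$ with respect to $Iso(\tau_{H,G})$, that $\Dual=T\circ\mathcal{P}_\delta=\mathcal{P}_{\delta^{op}}^{op}\circ(T^{-1})^{op}$, which makes the computation $(\Dual)^{op}\circ\Dual=\mathcal{P}_{\delta^{op}\circ\delta}$ transparent; and for the converse it plugs $x=Iso(\tau_{H,G})$ into diagram (8) to extract $T^{-1}\circ\Dual=(\Dual)^{op}\circ T$, whence $\delta^{op}\circ\delta=Id_A$---this is exactly the ``tracking how $T$ behaves under the opposite operation'' that you flag as the main obstacle.
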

\begin{proof}
Statements (1) and (2) are clearly equivalent if we take $\delta =\invol$. We now prove that (2) and (3) are equivalent. Taking $\Dual=T\circ \mathcal{P}_{\delta}$, we observe that the diagram
$$
\xymatrix{
A(H\times G) \ar[rr]^-{\delta_{H\times G}} \ar[d]_{A(Iso(\tau_{H,G}))}&&A^{op}(H\times G)\ar[d]^{A(Iso(\tau_{H,G}))} \\
A^{op}(G\times H)\ar[rr]_-{\delta^{op}_{G\times H}} &&A(G\times H) }
$$
commutes by naturality of $\delta$, this translates into a commutative diagram 
$$
\xymatrix{
\AsCatA \ar[rr]^-{\mathcal{P}_{\delta}} \ar[d]_{(T^{-1})^{op}}\ar@{.>}[rrd]^{\Dual} &&\mathcal{P}_{A^{op}}\ar[d]^T \\
\mathcal{P}_{A^{op}}^{op}\ar[rr]_-{\mathcal{P}_{\delta^{op}}^{op}} &&\AsCatA^{op} }
$$
of linear categories and linear functors, and so $\Dual= T\circ \mathcal{P}_{\delta}=\mathcal{P}_{\delta^{op}}^{op}\circ (T^{-1})^{op}$. Hence, 
\begin{align*}
    (\Dual)^{op}\circ \Dual &=(\mathcal{P}_{\delta^{op}}^{op}\circ (T^{-1})^{op})^{op}\circ (T\circ \mathcal{P}_{\delta})\\
    &=\mathcal{P}_{\delta^{op}}\circ T^{-1}\circ T\circ \mathcal{P}_{\delta}\\
    &=\mathcal{P}_{\delta^{op}\circ \delta}\\
    &=Id_{\mathcal{P}_A},
\end{align*}
and since $T^{-1}\circ \Dual=\mathcal{P}_{\delta}$ commutes with bisets, this proves (3). Conversely, by property (b) and \thref{LiftFun}, there is a morphism of Green biset functors $\delta:A\longrightarrow A^{op}$ such that $T^{-1}\circ \Dual=\mathcal{P}_{\delta}$. Taking $x=Iso(\tau_{H,G})$ in Diagram (8), we see that $T^{-1}\circ \Dual=(\Dual)^{op}\circ T$, and since $\delta_{H\times G}^{op}=\delta_{H\times G}$ as maps, then $\delta^{op}\circ \delta=Id_A$, proving (2).
\end{proof}

In analogy to $\star$-algebras, we present the following definition.

\begin{deff}
A \textit{$\star$-Green biset functor} is a Green $\mathcal{D}$-biset functor $A$ with an anti-involution $\_^{\star}$. If $C$ is another $\star$-Green biset functor, a \textit{morphism of $\star$-Green biset functors} is a morphism $f:A\longrightarrow C \in \GreenDk$ such that $f_G(a^{\star})=f_G(a)^{\star}$ for all $a\in A(G)$, $G\in Ob(\mathcal{D})$. The \textit{category of $\star$-Green biset functors} is denoted by $\StarGDk$.
\end{deff}

From now on, when we say that \textit{$A$ is a $\star$-Green biset functor}, it is implicitly stated that there is a chosen anti-involution $\invol$ and $\Dual=T\circ \mathcal{P}_{\delta}$ is its corresponding duality.

\begin{lemma}
Let $A$ be a $\star$-Green biset functor. Then $\Dual\circ \mathcal{P}_e=T\circ \mathcal{P}_e$.
\end{lemma}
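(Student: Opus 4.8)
The plan is to unfold the duality $\Dual$ according to its definition and reduce the claim to the already-established relation $\invol\circ e=e$ of \thref{AntInvProps}(3). Since $A$ is a $\star$-Green biset functor we have $\Dual=T\circ\mathcal{P}_{\delta}$ with $\delta=\invol$, so the functoriality of $\mathcal{P}_{(-)}$ (i.e.\ $\mathcal{P}_{g\circ f}=\mathcal{P}_g\circ\mathcal{P}_f$) gives
\[
\Dual\circ\mathcal{P}_e=T\circ\mathcal{P}_{\delta}\circ\mathcal{P}_e=T\circ\mathcal{P}_{\delta\circ e}.
\]
It therefore suffices to prove $\mathcal{P}_{\delta\circ e}=\mathcal{P}_e$, and since $\mathcal{P}_{(-)}$ depends only on the component arrows of a morphism, this in turn follows once we know that $\delta\circ e=e$ as morphisms $B_k\to A^{op}$.

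Before that, I would settle the bookkeeping of domains and codomains, which is the one place that needs attention. On the left-hand side $\mathcal{P}_e$ is the functor $\mathcal{P}_{B_k}\to\AsCatA$ induced by $e:B_k\to A$, followed by $\Dual:\AsCatA\to\AsCatA^{op}$; on the right-hand side the target $T:\mathcal{P}_{A^{op}}\to\AsCatA^{op}$ forces $\mathcal{P}_e$ to be read as the functor $\mathcal{P}_{B_k}\to\mathcal{P}_{A^{op}}$ induced by $e$ viewed as a morphism $B_k\to A^{op}$. This reading is legitimate precisely because $B_k$ is commutative and the image of $e$ lies in $CA$, hence in $Z(A(G))$ for each $G$: the same component arrows $e_G$ are then algebra homomorphisms $B_k(G)\to A(G)^{op}$, so $e$ is simultaneously the initial morphism into $A$ and into $A^{op}$. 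Both composites are thus functors $\mathcal{P}_{B_k}\to\AsCatA^{op}$ acting as the identity on objects, so it is enough to compare them on morphisms.

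With the types in place, the relation $\delta\circ e=e$ is immediate: at the level of component arrows $(\delta\circ e)_G(x)=\delta_G(e_G(x))=(e_G(x))^{\star}$, and \thref{AntInvProps}(3) yields $(e_G(x))^{\star}=e_G(x)$, which are exactly the component arrows of $e:B_k\to A^{op}$. Hence $\mathcal{P}_{\delta\circ e}=\mathcal{P}_e$ and the displayed chain of equalities closes. As a sanity check one can also verify the identity directly on a morphism $\alpha\in B_k(H\times G)$: both sides send $\alpha$ to $A(Iso(\tau_{H,G}))(e_{H\times G}(\alpha))$, the left-hand side a priori carrying an extra $\invol$ that disappears by \thref{AntInvProps}(3) together with the injectivity of $A(Iso(\tau_{H,G}))$. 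The only genuine obstacle is the codomain subtlety of the second paragraph; once the two incarnations of $e$ and of $\mathcal{P}_e$ are correctly identified, the statement is a one-line consequence of \thref{AntInvProps}(3) and the functoriality of $\mathcal{P}_{(-)}$.
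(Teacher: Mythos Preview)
Your proof is correct and follows exactly the same route as the paper: unfold $\Dual=T\circ\mathcal{P}_{\delta}$, use functoriality of $\mathcal{P}_{(-)}$, and then invoke \thref{AntInvProps}(3) to get $\delta\circ e=e$. The only difference is that you spell out the bookkeeping of codomains (that $e$ must be read as a morphism $B_k\to A^{op}$ on the right-hand side), a subtlety the paper leaves implicit.
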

\begin{proof}
$\Dual\circ \mathcal{P}_e=(T\circ \mathcal{P}_{\delta})\circ \mathcal{P}_e=T\circ \mathcal{P}_{\delta\circ e}=T\circ \mathcal{P}_{e}$ by part (3) of \thref{AntInvProps}.
\end{proof}

\begin{rmk}
If $\F$ is a field and $M$ is a finitely generated $(\F H,\F G)$-bimodule, $M^{\circ}$ is its dual bimodule and $\overrightarrow{M}$ denotes $M$ regarded as an $\F (H\times G)$-module, then it is straightforward to verify that $\overrightarrow{\left(M^{\circ}\right)}\cong \F Iso(\tau_{H,G})\bigotimes_{\F(H\times G)}\left(\overrightarrow{M}\right)^*$ as $\F (G\times H)$-modules. This is why we consider the functor $\Dual =T\circ \mathcal{P}_{\delta}$ instead of $\mathcal{P}_{\delta}$ alone. 
\end{rmk}

\begin{exx}
If $A$ is a $\star$-Green biset functor, then for every $G$ in $Ob(\mathcal{D})$, the essential algebra $\widehat{A}(G)$ (see \cite{Gar2}) is a $\star$-algebra for the anti-involution $\widehat{\alpha}^{\star}:=\widehat{\alpha^{\bullet}}$, for $\alpha\in A(G\times G)$, which is well-defined since $(I_A(G))^{\bullet}\subseteq I_A(G)$. 
\end{exx}

\begin{lemma}\thlabel{Dmorphd}
If $A$ is a $\star$-Green biset functor, then $\widetilde{(a^{\star})}=(\widetilde{a})^{\bullet}$ for all $a\in A(G)$ and $G\in Ob(\mathcal{D})$.
\end{lemma}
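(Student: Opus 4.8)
The plan is to unwind both sides of the claimed identity into operations of the underlying Green biset functor $A$, and then to collapse the resulting composite of bisets using a single isomorphism already recorded in \thref{Rflxns}. Recall that the chosen duality is $\Dual=T\circ\mathcal{P}_{\delta}$ with $\delta=\invol$. Expanding the definitions of $\mathcal{P}_{\delta}$ and of $T$ from \thref{OppAssCat}, for any $\alpha\in\AsCatA(G,H)=A(H\times G)$ one has
$$\alpha^{\bullet}=T_{G,H}\big(\mathcal{P}_{\delta}(\alpha)\big)=A(Iso(\tau_{H,G}))(\alpha^{\star}),$$
since the components of $\mathcal{P}_{\delta}$ are the $\invol$ maps and $T_{G,H}=A(Iso(\tau_{H,G}))$. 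This formula is the only ingredient I need from the definition of $\Dual$.

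First I would specialize this to $\alpha=\widetilde{a}=A(Ind(\Delta))(a)\in A(G\times G)$, that is, to the diagonal case $H=G$, which yields
$$(\widetilde{a})^{\bullet}=A(Iso(\tau_{G,G}))\big((\widetilde{a})^{\star}\big)=A(Iso(\tau_{G,G}))\big(A(Ind(\Delta))(a)^{\star}\big).$$
Next I would use that $\invol$ is an endomorphism of $A$ \emph{as a biset functor}, hence natural with respect to every biset operation; applying naturality to $Ind(\Delta)$ gives $A(Ind(\Delta))(a)^{\star}=A(Ind(\Delta))(a^{\star})$. Substituting and invoking functoriality of $A$ then merges the two biset operations into one, so that
$$(\widetilde{a})^{\bullet}=A\big(Iso(\tau_{G,G})\circ Ind(\Delta)\big)(a^{\star}).$$

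Finally I would invoke \thref{Rflxns}(4), which gives $Iso(\tau_{G,G})\circ Ind(\Delta)\cong Ind(\Delta)$ as $(G\times G,G)$-bisets; applying $A$ to this isomorphism identifies the right-hand side with $A(Ind(\Delta))(a^{\star})=\widetilde{(a^{\star})}$, which is precisely the assertion. I do not anticipate a genuine obstacle, as the argument is entirely bookkeeping once the formula for $\Dual$ is expanded. The one point demanding care is keeping the variances straight, namely verifying that the convention $T_{G,H}=A(Iso(\tau_{H,G}))$ indeed produces $Iso(\tau_{G,G})$ in the case $H=G$ and that naturality of $\invol$ is applied to $Ind(\Delta)$ rather than to $Res(\Delta)$; after that, \thref{Rflxns}(4) does all the remaining work.
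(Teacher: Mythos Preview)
Your argument is correct and is essentially the same as the paper's: both unfold $(\widetilde{a})^{\bullet}$ as $A(Iso(\tau_{G,G}))\big(A(Ind(\Delta))(a)^{\star}\big)$, use naturality of $\invol$ with respect to $Ind(\Delta)$, and then collapse $Iso(\tau_{G,G})\circ Ind(\Delta)\cong Ind(\Delta)$; the paper merely packages these steps into a commutative diagram. Note that the paper cites part~(6) of \thref{Rflxns} at this last step, whereas the identity actually used is part~(4), exactly as you invoke it.
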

\begin{proof}
Note that the diagram
$$
\xymatrix{
A(G) \ar[rr]^-{A(Ind(\Delta))} \ar[d]_{\invol} &&A(G\times G)\ar[rr]^{A(Iso(\tau_{G,G}))}\ar[d]_{\invol}\ar[rrd]^{\Dual} &&A(G\times G) \ar[d]^{\invol}\\
A(G) \ar[rr]_-{A(Ind(\Delta))}\ar@/_2pc/[rrrr]_{A(Ind(\Delta))} &&A(G\times G)\ar[rr]_{A(Iso(\tau_{G,G}))} &&A(G\times G)\\
&&&&
}
$$
commutes by the naturality of $\invol$ and part (6) of \thref{Rflxns}. 
\end{proof}

For a $\star$-Green biset functor $A$, we set
$$\mathfrak{Re}\;A(G)=\{a\in A(G)|a^{\star}=a\},$$
$$\mathfrak{Im}\;A(G)=\{a\in A(G)|a^{\star}=-a\}$$
for each $G\in Ob(\mathcal{D})$. This defines subfunctors $\mathfrak{Re}\;A$ and $\mathfrak{Im}\;A$ of $A$, since these are the kernels of the endomorphisms $Id_A-\invol$ and $Id_A+\invol$, respectively. Moreover, we have $Im(Id_A+\invol)\subseteq \mathfrak{Re}\;A$ and $Im(Id_A-\invol)\subseteq \mathfrak{Im}\;A$. 

\begin{lemma}\thlabel{PropReIm}
If $A$ is a $\star$-Green biset functor, then $e(B_k)\subset \mathfrak{Re}\;A$. In particular, $\mathfrak{Re}\;A\neq 0$ if $A\neq 0$.
\end{lemma}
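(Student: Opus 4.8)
The plan is to read off the first inclusion directly from part (3) of \thref{AntInvProps}, and to obtain the nonvanishing consequence from the fact that the unit of each evaluation is fixed by the anti-involution. First I would recall that $e(B_k)$ is the subfunctor of $A$ whose evaluation at $G$ is the image $e_G(B_k(G))\subseteq A(G)$, and that by definition $\mathfrak{Re}\;A(G)$ consists of the $\star$-fixed elements of $A(G)$. Hence the inclusion $e(B_k)\subseteq \mathfrak{Re}\;A$ amounts to showing $e_G(b)^{\star}=e_G(b)$ for every $G\in Ob(\mathcal{D})$ and every $b\in B_k(G)$.

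To verify this, write the anti-involution as the biset functor morphism $\invol:A\longrightarrow A$. Then $e_G(b)^{\star}$ is precisely the $G$-component of the composite $\invol\circ e$ evaluated at $b$, that is $(\invol\circ e)_G(b)$. Part (3) of \thref{AntInvProps} asserts $\invol\circ e=e$, so $(\invol\circ e)_G(b)=e_G(b)$, settling the containment with no further computation. (Implicitly this uses that $\invol\circ e$ lands in the commutant $CA$ and is therefore a morphism of Green biset functors, hence must coincide with the initial morphism $e$; but all of that is already packaged inside \thref{AntInvProps}.)

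For the ``in particular'' clause I would argue that $\mathfrak{Re}\;A$ already contains the unit of every evaluation. Indeed, it was observed just after \thref{invol} that $1^{\star}=1$ in $A(G)$ for all $G$, so $1\in \mathfrak{Re}\;A(G)$; equivalently, $1=e_G(1)\in e(B_k)(G)$ and one invokes the inclusion just proved. Now if $A\neq 0$ there is some $G$ with $A(G)\neq 0$, and since $A(G)$ is a unital $k$-algebra its identity is then nonzero, whence $\mathfrak{Re}\;A(G)\ni 1\neq 0$ and $\mathfrak{Re}\;A\neq 0$. I expect no real obstacle here: the essential content is already encoded in \thref{AntInvProps}, and the only point deserving a sentence is the elementary remark that a nonzero unital algebra has nonzero identity, which is exactly what converts $A\neq 0$ into $\mathfrak{Re}\;A\neq 0$.
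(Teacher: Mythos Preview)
Your proof is correct and follows essentially the same approach as the paper, which also deduces the inclusion directly from part (3) of \thref{AntInvProps}. You supply slightly more detail for the ``in particular'' clause than the paper does, but the argument is the same.
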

\begin{proof}
From part (3) of \thref{AntInvProps}, $e_G(x)^{\star}=e_G(x)$ for all $x\in B(G)$ and all $G$.
\end{proof}

\begin{prop}\thlabel{Re+Im}
Let $A$ be a $\star$-Green biset functor. If $2$ is invertible in $k$, then $A\cong \mathfrak{Re}\;A\oplus \mathfrak{Im}\;A$.
\end{prop}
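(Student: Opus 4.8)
The plan is to apply the standard symmetrization argument available once $2$ is a unit. Since $a^{\star\star}=a$, the anti-involution $\invol$ is an involution of the underlying biset functor, i.e. $\invol\circ\invol = Id_A$. Because $\tfrac{1}{2}\in k$, I would introduce the two endomorphisms of $A$ in $\mathcal{F}_{\mathcal{D},k}$,
$$p_+:=\tfrac{1}{2}(Id_A+\invol), \qquad p_-:=\tfrac{1}{2}(Id_A-\invol),$$
which are again natural transformations since $\invol$ and $Id_A$ are.

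First I would verify that $p_+$ and $p_-$ form a complete set of orthogonal idempotents in $End_{\mathcal{F}_{\mathcal{D},k}}(A)$: expanding the squares and products and using $\invol\circ\invol=Id_A$ gives $p_+^2=p_+$, $p_-^2=p_-$, $p_+\circ p_-=p_-\circ p_+=0$, and $p_++p_-=Id_A$. As $\mathcal{F}_{\mathcal{D},k}$ is abelian (being a category of $k$-linear functors into ${}_kMod$), such a decomposition of $Id_A$ splits $A$ as a direct sum of biset functors $A\cong Im(p_+)\oplus Im(p_-)$; equivalently, the splitting holds objectwise in each $A(G)$ and is natural in $G$.

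It then remains to identify the two summands. The inclusions $Im(Id_A+\invol)\subseteq \mathfrak{Re}\,A$ and $Im(Id_A-\invol)\subseteq \mathfrak{Im}\,A$ are already recorded above, and $Im(p_\pm)=Im(Id_A\pm\invol)$ since $\tfrac{1}{2}$ is a unit; so each image lands in the corresponding subfunctor. For the reverse inclusions, if $a^\star=a$ then $a=\tfrac{1}{2}(a+a^\star)=p_+(a)\in Im(p_+)$, and symmetrically $\mathfrak{Im}\,A\subseteq Im(p_-)$. Hence $Im(p_+)=\mathfrak{Re}\,A$ and $Im(p_-)=\mathfrak{Im}\,A$, yielding $A\cong \mathfrak{Re}\,A\oplus\mathfrak{Im}\,A$.

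There is no genuine obstacle here; the only point worth flagging is that this is an isomorphism of \emph{biset} functors (equivalently of the underlying $k$-modules, naturally in $G$) rather than of Green biset functors, since in general neither summand is a subalgebra: for $a^\star=a$ and $b^\star=b$ one has $(ab)^\star=b^\star a^\star=ba$, which equals $ab$ only when $a$ and $b$ commute.
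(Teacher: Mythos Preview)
Your proof is correct and follows exactly the same approach as the paper: both introduce the idempotents $\tfrac{1}{2}(Id_A\pm\invol)$, check they are complete orthogonal idempotents, and identify their images with $\mathfrak{Re}\,A$ and $\mathfrak{Im}\,A$. Your version is simply more detailed, and your closing remark that the decomposition is only one of biset functors (not Green biset functors) is a useful clarification the paper leaves implicit.
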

\begin{proof}
The morphisms $\rho=2^{-1}(Id_A+\invol)$ and $\iota=2^{-1}(Id_A-\invol)$ are orthogonal idempotent endomorphisms of $A$ such that $\rho+\iota=Id_A$; moreover, $Im\;\rho=\mathfrak{Re}\;A$ and $Im\;\iota=\mathfrak{Im}\;A$.
\end{proof}

\begin{exx}
If $k$ is a field of characteristic zero, then $\kRQ$ is a simple biset functor, and by \thref{PropReIm} and \thref{Re+Im}, we have that $\mathfrak{Re}\;\kRQ=\kRQ$ for any anti-involution $\invol$, proving that $\invol=Id_{\kRQ}$. Now, if $\invol$ is an anti-involution on $R_{\Q}$, then $Id_{k}\otimes \invol$ is an anti-involution on $\kRQ$ and so it has to be the identity, therefore $\invol=Id_{R_{\Q}}$. 
\end{exx}


\section{Generalized orthogonal units}

For a $\star$-Green biset functor  $A$, we have induced involutions on $A(G)^{\times}$ and on $Aut_{\AsCatA}(G)$ by the corresponding duality. This motivates the following definition.

\begin{deff}
Let $A$ be a $\star$-Green biset functor.
\begin{enumerate}
    \item An element $u\in A(G)^{\times}$ is said to be \textit{orthogonal} (\textit{with respect to $\invol$}) if $u^{\star}=u^{-1}$. We write $A(G)^{\times,\perp}$ for the set of orthogonal units of $A(G)$.
    \item An isomorphism $\omega \in \AsCatA$ is said to be \textit{orthogonal} (\textit{with respect to $\Dual$}) if $\omega^{\bullet}=\omega^{-1}$. We write $Iso_{\AsCatA}(G,H)^{\perp}$ for the set of orthogonal isomorphisms from $G$ to $H$, and $Aut_{\AsCatA}(G)^{\perp}$ for the set of orthogonal automorphisms of $G$ in $\AsCatA$.
\end{enumerate}
\end{deff}


\begin{prop}
    Let $A$ be a $\star$-Green biset functor and $G\in Ob(\mathcal{D})$.
    \begin{enumerate}
        \item The set $A(G)^{\times,\perp}$ is a subgroup of $A(G)^{\times}$,
        \item Composition and inverses of orthogonal isomorphisms are orthogonal. In particular, the set $Aut_{\AsCatA}(G)^{\perp}$ is a subgroup of $Aut_{\AsCatA}(G)$.
    \end{enumerate}
\end{prop}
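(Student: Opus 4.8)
The plan is to verify the group axioms directly in each case. Part (1) rests on the fact that $\invol$ is an anti-involution of the algebra $A(G)$, while part (2) rests on the fact that $\Dual$ is a linear functor into the opposite category; the two arguments run in exact parallel.

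For part (1), I would first observe that $1\in A(G)^{\times,\perp}$, since $1^{\star}=1=1^{-1}$. If $u,v\in A(G)^{\times,\perp}$, then $(uv)^{\star}=v^{\star}u^{\star}=v^{-1}u^{-1}=(uv)^{-1}$ by \thref{invol}, so the set is closed under multiplication. For inverses, I would apply $\invol$ to the relation $uu^{-1}=1$ and use $1^{\star}=1$ to get $(u^{-1})^{\star}u^{\star}=1$, whence $(u^{-1})^{\star}=(u^{\star})^{-1}=(u^{-1})^{-1}=u$; thus $u^{-1}\in A(G)^{\times,\perp}$. This suffices to conclude that $A(G)^{\times,\perp}$ is a subgroup of $A(G)^{\times}$.

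For part (2), the structural input is that $\Dual:\AsCatA\longrightarrow\AsCatA^{op}$ preserves identities and, being a functor into the opposite category, reverses composition, so that $(\psi\circ\omega)^{\bullet}=\omega^{\bullet}\circ\psi^{\bullet}$ for composable arrows. Given orthogonal isomorphisms $\omega$ and $\psi$, their composite is again an isomorphism and $(\psi\circ\omega)^{\bullet}=\omega^{\bullet}\circ\psi^{\bullet}=\omega^{-1}\circ\psi^{-1}=(\psi\circ\omega)^{-1}$, so it is orthogonal. For an orthogonal $\omega\colon G\to H$, I would apply $\Dual$ to $\omega\circ\omega^{-1}=Id_H$ and use that identities are preserved to obtain $(\omega^{-1})^{\bullet}\circ\omega^{\bullet}=Id_H$, hence $(\omega^{-1})^{\bullet}=(\omega^{\bullet})^{-1}=(\omega^{-1})^{-1}$, showing $\omega^{-1}$ is orthogonal.

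The ``in particular'' statement then follows at once: taking $G=H=K$, the two computations above show that $Aut_{\AsCatA}(G)^{\perp}$ is closed under composition and inverses, and it contains $Id_G$ because $(Id_G)^{\bullet}=Id_G=(Id_G)^{-1}$, so it is a subgroup. I do not anticipate a real obstacle; the only point needing care is the variance of $\Dual$—that, as a functor to $\AsCatA^{op}$, it reverses the order of composition exactly as $\invol$ reverses products—so that $(\psi\circ\omega)^{\bullet}=\omega^{\bullet}\circ\psi^{\bullet}$ matches $(\psi\circ\omega)^{-1}=\omega^{-1}\circ\psi^{-1}$ term by term. One could alternatively deduce part (1) from part (2) via \thref{map2end} and \thref{Dmorphd}, since the injective algebra homomorphism $\widetilde{\_}$ sends units to automorphisms and satisfies $\widetilde{u^{\star}}=\widetilde{u}^{\bullet}$, but the direct check is shorter.
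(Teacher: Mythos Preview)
Your proof is correct and follows essentially the same direct verification as the paper. The only cosmetic difference is in the inverse step: the paper writes $(u^{-1})^{\star}=(u^{\star})^{\star}=u$ (and $(\alpha^{-1})^{\bullet}=\alpha^{\bullet\bullet}=\alpha$) by substituting $u^{-1}=u^{\star}$ directly, whereas you derive the same conclusion by applying $\invol$ (resp.\ $\Dual$) to the equation $uu^{-1}=1$; both routes are equivalent and equally short.
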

\begin{proof}
If $u,v\in A(G)^{\times,\perp}$, then $(uv)^{\star}=v^{\star}u^{\star}=v^{-1}u^{-1}=(uv)^{-1}$, and $(u^{-1})^{\star}=(u^{\star})^{\star}=u$, which proves (1). If $\beta \in  Iso_{\AsCatA}(H,K)^{\perp}$ and $\alpha \in  Iso_{\AsCatA}(G,H)^{\perp}$, then $(\beta\circ\alpha)^{\bullet}=\alpha^{\bullet}\circ \beta^{\bullet}=\alpha^{-1}\circ \beta^{-1}=(\beta\circ \alpha)^{-1}$ and $(\alpha^{-1})^{\bullet}=\alpha^{\bullet\bullet}=\alpha$, proving (2). 
\end{proof}

\begin{lemma} \thlabel{involnmorph}
Let $A$ be a $\star$-Green biset functor. If $\phi:G\longrightarrow H$ is a group homomorphism with $G,H\in Ob(\mathcal{D})$, then $A(Res(\phi))$ can be restricted to a group homomorphism $A(H)^{\times,\perp} \longrightarrow A(G)^{\times,\perp}$. Moreover, if $\phi$ is an epimorphism, then $A(Res(\phi))$ is a monomorphism, and if $u\in A(H)^{\times}$ is such that $A(Res(\phi))(u)\in A(G)^{\times,\perp}$, then $u\in A(H)^{\times,\perp}$.
\end{lemma}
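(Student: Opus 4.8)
The plan rests on two facts that are available throughout. First, by condition (1) of \thref{GBF1} the map $A(Res(\phi))\colon A(H)\to A(G)$ is a unital $k$-algebra homomorphism, so it sends $A(H)^{\times}$ into $A(G)^{\times}$ and satisfies $A(Res(\phi))(u^{-1})=A(Res(\phi))(u)^{-1}$ for every unit $u$. Second, since $\invol$ is an endomorphism of $A$ \emph{as a biset functor}, naturality applied to the morphism $Res(\phi)$ gives $A(Res(\phi))(a^{\star})=A(Res(\phi))(a)^{\star}$ for all $a\in A(H)$. These are the only structural inputs; the rest is formal.

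For the first assertion I would take $u\in A(H)^{\times,\perp}$ and compute $A(Res(\phi))(u)^{\star}=A(Res(\phi))(u^{\star})=A(Res(\phi))(u^{-1})=A(Res(\phi))(u)^{-1}$, using naturality of $\invol$ for the first equality, the hypothesis $u^{\star}=u^{-1}$ for the second, and the algebra-map property for the third. This shows $A(Res(\phi))(u)\in A(G)^{\times,\perp}$, and since $A(Res(\phi))$ is multiplicative and unit-preserving, its restriction is a group homomorphism $A(H)^{\times,\perp}\to A(G)^{\times,\perp}$.

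The substantive step is the injectivity when $\phi$ is an epimorphism, and I expect the crux to be producing a left inverse of $A(Res(\phi))$ already at the level of bisets. Concretely, I would establish the isomorphism $Ind(\phi)\circ Res(\phi)\cong \text{Id}_H$ in $\mathcal{C}$. Factoring $\phi$ as an isomorphism after the canonical projection $G\to G/\ker\phi$, this is the standard identity that deflation followed by inflation is the identity on $G/\ker\phi$; it can also be verified directly as an isomorphism of $(H,H)$-bisets, the point being that the orbit of a pair in $Ind(\phi)\times_G Res(\phi)$ collapses to a single copy of $H$ precisely because $\phi$ is surjective. Applying the (covariant) functor $A$ then yields $A(Ind(\phi))\circ A(Res(\phi))=Id_{A(H)}$, so $A(Ind(\phi))$ is a retraction of $A(Res(\phi))$ and the latter is injective; in particular its restriction to $A(H)^{\times,\perp}$ is a monomorphism.

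Finally, for the reflection property (again with $\phi$ an epimorphism) I would reuse naturality: given $u\in A(H)^{\times}$ with $A(Res(\phi))(u)\in A(G)^{\times,\perp}$, the same chain of equalities produces $A(Res(\phi))(u^{\star})=A(Res(\phi))(u)^{\star}=A(Res(\phi))(u)^{-1}=A(Res(\phi))(u^{-1})$, and injectivity of $A(Res(\phi))$ established above lets me cancel to conclude $u^{\star}=u^{-1}$, i.e.\ $u\in A(H)^{\times,\perp}$. The only genuine obstacle is the biset isomorphism $Ind(\phi)\circ Res(\phi)\cong \text{Id}_H$; once it and the naturality of $\invol$ are in hand, every part of the statement is immediate.
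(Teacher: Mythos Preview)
Your proof is correct and follows essentially the same approach as the paper: both use that $A(Res(\phi))$ is a $k$-algebra homomorphism together with the naturality of $\invol$ to handle the first assertion, invoke the biset isomorphism $Ind(\phi)\circ Res(\phi)\cong H$ for injectivity when $\phi$ is surjective, and then combine injectivity with naturality to reflect orthogonality. Your write-up is simply a more detailed version of the paper's terse argument.
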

\begin{proof}
Since $A(Res(\phi))$ is a $k$-algebra homomorphism, it can be restricted to a morphism between the unit groups, and it preserves orthogonal units since $\invol$ commutes with bisets. If $\phi$ is an epimorphism, then $Ind(\phi)\circ Res(\phi)\cong H$ and so $A(Res(\phi))$ is injective. In this case, if $A(Res(\phi))(u)\in A(G)^{\times,\perp}$ for a unit $u$, then $A(Res(\phi))(u^{\star})=A(Res(\phi))(u^{-1})$, hence $u^{\star}=u^{-1}$.
\end{proof}

\begin{prop}
Let $A$ be a $\star$-Green biset functor and $G\in Ob(\mathcal{D})$. The map $\widetilde{\_}:A(G)\longrightarrow End_{\AsCatA}(G)$ can be restricted to a group monomorphism $A(G)^{\times,\perp} \longrightarrow Aut_{\AsCatA}(G)^{\perp}$. Moreover, if $u \in A(G)^{\times}$ is such that $\widetilde{u}\in Aut_{\AsCatA}(G)^{\perp}$, then $u\in A(G)^{\times,\perp}$.
\end{prop}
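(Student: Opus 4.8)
The plan is to leverage the two structural facts already in hand: that $\widetilde{\_}$ is an injective $k$-algebra homomorphism (\thref{map2end}) and that it intertwines the two involutions via $\widetilde{a^{\star}}=(\widetilde{a})^{\bullet}$ (\thref{Dmorphd}). Since any unital $k$-algebra homomorphism carries units to units and $Aut_{\AsCatA}(G)$ is precisely the unit group of $End_{\AsCatA}(G)$, the map $\widetilde{\_}$ at once restricts to an injective group homomorphism $A(G)^{\times}\longrightarrow Aut_{\AsCatA}(G)$, under which $\widetilde{u^{-1}}=(\widetilde{u})^{-1}$. The whole statement then reduces to tracking how orthogonality behaves under this map, orthogonality being on both sides the assertion that the relevant involution agrees with inversion.

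For the forward inclusion I would take $u\in A(G)^{\times,\perp}$, so that $u^{\star}=u^{-1}$, and compute $(\widetilde{u})^{\bullet}=\widetilde{u^{\star}}=\widetilde{u^{-1}}=(\widetilde{u})^{-1}$, using \thref{Dmorphd} for the first equality and the fact that $\widetilde{\_}$ respects inversion of units for the last. This exhibits $\widetilde{u}\in Aut_{\AsCatA}(G)^{\perp}$, so $\widetilde{\_}$ restricts to $A(G)^{\times,\perp}\longrightarrow Aut_{\AsCatA}(G)^{\perp}$, a group monomorphism because it is a restriction of the injective homomorphism above. For the converse assertion I would run the same chain in reverse: given $u\in A(G)^{\times}$ with $(\widetilde{u})^{\bullet}=(\widetilde{u})^{-1}$, I get $\widetilde{u^{\star}}=(\widetilde{u})^{\bullet}=(\widetilde{u})^{-1}=\widetilde{u^{-1}}$, and then cancel the tildes by injectivity of $\widetilde{\_}$ to conclude $u^{\star}=u^{-1}$, that is, $u\in A(G)^{\times,\perp}$.

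I do not expect a substantial obstacle, since the genuine content is carried by \thref{Dmorphd}; the only points needing a line of justification are the purely formal facts that a unital algebra homomorphism sends the unit group into the unit group and commutes with inversion there, together with the identification $End_{\AsCatA}(G)^{\times}=Aut_{\AsCatA}(G)$. With these in place each direction is a short chain of equalities, the injectivity of $\widetilde{\_}$ supplying exactly the cancellation that makes the converse go through.
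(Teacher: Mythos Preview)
Your proposal is correct and follows essentially the same approach as the paper: invoke the injectivity of the $k$-algebra homomorphism $\widetilde{\_}$ from \thref{map2end} to get a monomorphism on unit groups, then use \thref{Dmorphd} to transport orthogonality in both directions. The paper's proof is simply a terser version of what you wrote.
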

\begin{proof}
The map $\widetilde{\_}$ is an injective $k$-algebra homomorphism, therefore it can be restricted to a monomorphism between the unit groups. By \thref{Dmorphd}, $\widetilde{\_}$ sends orthogonal units to orthogonal automorphisms, and if $\widetilde{u}$ is orthogonal then so is $u$.
\end{proof}

For every $N\unlhd G$, there is a monomorphism from $B(G/N,G/N)^{\times}$ to $B(G,G)^{\times}$ preserving orthogonal units \cite[Prop. 3.3]{Bar}. We now generalize this to an arbitrary Green biset functor.

\begin{lemma}\thlabel{EpiMul}
Let $A$ be a Green biset functor and $\phi:G\longrightarrow H$ be a group epimorphism with $G,H\in Ob(\mathcal{D})$. The map $\E_A(Res(\phi)):End_{\AsCatA}(H)\longrightarrow End_{\AsCatA}(G)$ is $k$-linear, multiplicative and injective. In particular, if $\phi$ is an isomorphism, then $\E_A(Iso(\phi))$ is a $k$-algebra isomorphism.
\end{lemma}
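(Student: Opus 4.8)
The plan is to verify the three asserted properties one at a time, the decisive input in every case being that $Ind(\phi)\circ Res(\phi)\cong H$ whenever $\phi$ is an epimorphism — exactly the biset isomorphism already exploited in the proof of \thref{involnmorph}. Recalling that $Res(\phi)^{op}=Ind(\phi)$ and unwinding the defining formula (8), the map under study is
$$\E_A(Res(\phi))(\alpha)=e_{G\times H}\!\left(\overrightarrow{Res(\phi)}\right)\circ \alpha \circ e_{H\times G}\!\left(\overrightarrow{Ind(\phi)}\right)$$
for $\alpha\in End_{\AsCatA}(H)$. The $k$-linearity is then immediate, since it is a particular value of the $k$-linear assignment $\E_A(x)$ recorded just after (8).

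For multiplicativity I would take $\alpha,\beta\in End_{\AsCatA}(H)$, expand $\E_A(Res(\phi))(\alpha)\circ\E_A(Res(\phi))(\beta)$ using associativity of composition in $\AsCatA$, and isolate the inner factor $e_{H\times G}(\overrightarrow{Ind(\phi)})\circ e_{G\times H}(\overrightarrow{Res(\phi)})$. The crux is that this factor equals $Id_H$: because $\overrightarrow{(\_)}:k\mathcal{D}\to \mathcal{P}_{B_k}$ and $\mathcal{P}_e:\mathcal{P}_{B_k}\to \AsCatA$ are functors, this composite equals $\mathcal{P}_e(\overrightarrow{Ind(\phi)\circ Res(\phi)})$, and $Ind(\phi)\circ Res(\phi)\cong H$ turns it into $\mathcal{P}_e(\overrightarrow{\text{Id}_H})=Id_H$. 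Once the inner factor is collapsed, the expanded product telescopes to $\E_A(Res(\phi))(\alpha\circ\beta)$.

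For injectivity I would exhibit an explicit retraction rather than argue directly: the map $\E_A(Ind(\phi)):End_{\AsCatA}(G)\to End_{\AsCatA}(H)$ satisfies, by functoriality of $\E_A$, the identity $\E_A(Ind(\phi))\circ \E_A(Res(\phi))=\E_A(Ind(\phi)\circ Res(\phi))=\E_A(\text{Id}_H)=Id$, the middle equality again using $Ind(\phi)\circ Res(\phi)\cong H$; hence $\E_A(Res(\phi))$ is split injective. For the final clause, when $\phi$ is an isomorphism, $\phi^{-1}$ is again an epimorphism and $Iso(\phi)\cong Res(\phi^{-1})$, so the first part already shows $\E_A(Iso(\phi))$ is $k$-linear and multiplicative; functoriality gives $\E_A(Iso(\phi))\circ\E_A(Iso(\phi^{-1}))=\E_A(\text{Id}_H)=Id$ together with the symmetric relation, so it is bijective; and $\E_A(Iso(\phi))(\text{Id}_G)=\mathcal{P}_e(\overrightarrow{Ind(\phi)\circ Res(\phi)})=Id_H$ shows it is unital, hence a $k$-algebra isomorphism. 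The genuinely delicate point — and the main obstacle — is precisely the identity $e_{H\times G}(\overrightarrow{Ind(\phi)})\circ e_{G\times H}(\overrightarrow{Res(\phi)})=Id_H$: one must keep careful track of the direction of composition and of the source/target objects in $\AsCatA$ so that the composite of the two images of $e$ is correctly recognized, through $\mathcal{P}_e$ and $\overrightarrow{(\_)}$, as originating from the biset composition $Ind(\phi)\circ Res(\phi)$, which the epimorphism hypothesis forces to be the identity; note in particular that unitality does \emph{not} persist for merely epimorphic $\phi$, since $Res(\phi)\circ Ind(\phi)\not\cong G$ in general, which is exactly why the statement claims only multiplicativity rather than being an algebra homomorphism.
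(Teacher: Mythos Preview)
Your proof is correct and follows essentially the same approach as the paper's: both rest on the isomorphism $Ind(\phi)\circ Res(\phi)\cong H$ for epimorphic $\phi$ together with the functoriality of $\E_A$, with your write-up simply unpacking in full the multiplicativity and injectivity steps that the paper leaves to the cited reference. One tiny slip: the defining formula you invoke for $\E_A(x)$ is equation~(7) in the paper, not~(8).
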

\begin{proof}
As in \cite[Lemma 3.1]{Bar}, this is a consequence of the fact that $Ind(\phi) \circ Res(\phi)\cong H$ and the functoriality of $\E_A$.
\end{proof}

\begin{prop}
Let $A$ be Green biset functor, $G\in Ob(\mathcal{D})$ and $N\unlhd G$. The map
\begin{align}
    End_{\AsCatA}(G/N)\longrightarrow End_{\AsCatA}(G), \alpha\mapsto Id_G+\E_A(Inf_{G/N}^G)(\alpha-Id_{G/N})
\end{align}
can be restricted to a group monomorphism
$$dAInf_{G/N}^G:Aut_{\AsCatA}(G/N)\longrightarrow Aut_{\AsCatA}(G).$$
Moreover, we have:
\begin{enumerate}
    \item $dAInf_{G/1}^G=\E_A(Inf_{G/1}^G)|_{Aut_{\AsCatA}(G/1)}$ is an isomorphism,
    \item If $M,N\unlhd G$ and $N\leq M$, then
    $$dAInf_{G/N}^G\circ dAInf_{G/M}^{G/N}=dAInf_{G/M}^G$$
    where we identify $G/M$ with the quotient $(G/N)/(M/N)$.
\end{enumerate}
\end{prop}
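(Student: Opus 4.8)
The plan is to recognize the displayed map as an affine correction of a non-unital multiplicative map, and then to transport everything through the functor $\E_A$ using its functoriality together with \thref{EpiMul}. First I would note that inflation is restriction along the quotient map, $Inf_{G/N}^G = Res(\pi)$ for the projection $\pi:G\longrightarrow G/N$, which is an epimorphism. Hence \thref{EpiMul} applies and shows that $\psi:=\E_A(Inf_{G/N}^G):End_{\AsCatA}(G/N)\longrightarrow End_{\AsCatA}(G)$ is $k$-linear, multiplicative and injective. The subtlety is that $\psi$ need not be unital: setting $\epsilon:=\psi(Id_{G/N})$, multiplicativity gives that $\epsilon$ is idempotent and that $\psi(\alpha)=\psi(Id_{G/N}\,\alpha\,Id_{G/N})=\epsilon\,\psi(\alpha)\,\epsilon$ for every $\alpha$, so the image of $\psi$ lies in the corner $\epsilon\,End_{\AsCatA}(G)\,\epsilon$.

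Next I would rewrite the displayed map, using the $k$-linearity of $\psi$, as $\Phi(\alpha)=(Id_G-\epsilon)+\psi(\alpha)$. The key observation is the two-sided orthogonality $\psi(\alpha)(Id_G-\epsilon)=\epsilon\psi(\alpha)\epsilon(Id_G-\epsilon)=0$ and $(Id_G-\epsilon)\psi(\alpha)=0$, which follows from $\epsilon(Id_G-\epsilon)=0$. Expanding $\Phi(\alpha)\Phi(\beta)$ then collapses to $(Id_G-\epsilon)(Id_G-\epsilon)+\psi(\alpha)\psi(\beta)=(Id_G-\epsilon)+\psi(\alpha\beta)=\Phi(\alpha\beta)$, while $\Phi(Id_{G/N})=(Id_G-\epsilon)+\epsilon=Id_G$. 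Thus $\Phi$ is unital and multiplicative, so it carries units to units with $\Phi(\alpha)^{-1}=\Phi(\alpha^{-1})$, yielding the group homomorphism $dAInf_{G/N}^G$ on the automorphism groups; injectivity is inherited at once from $\Phi(\alpha)-\Phi(\beta)=\psi(\alpha-\beta)$ and the injectivity of $\psi$.

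For the two additional claims I would argue as follows. When $N=1$ the projection $\pi$ is an isomorphism, so $Inf_{G/1}^G=Iso(\pi)$, and the final clause of \thref{EpiMul} makes $\psi=\E_A(Inf_{G/1}^G)$ a $k$-algebra isomorphism; being unital it forces $\epsilon=Id_G$, hence $\Phi=\psi$, and its restriction to units is a group isomorphism, giving part (1). For part (2), the essential input is transitivity of inflation in the biset category, $Inf_{G/N}^G\circ Inf_{(G/N)/(M/N)}^{G/N}\cong Inf_{G/M}^G$, under the identification $(G/N)/(M/N)\cong G/M$ from the third isomorphism theorem. Functoriality of $\E_A$ then gives $\psi_3=\psi_1\circ\psi_2$ for the three inflation maps $\psi_1,\psi_2,\psi_3$, whence $\epsilon_3=\psi_1(\epsilon_2)$. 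A short computation, distributing $\psi_1$ over a difference by $k$-linearity, yields $dAInf_{G/N}^G\big(dAInf_{G/M}^{G/N}(\alpha)\big)=Id_G+\psi_1\big(\psi_2(\alpha)-\epsilon_2\big)=Id_G+\psi_3(\alpha-Id_{G/M})=dAInf_{G/M}^G(\alpha)$.

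The main obstacle is conceptual rather than computational: one must see that $\E_A(Inf_{G/N}^G)$ is multiplicative but not unital, and that the affine correction $\alpha\mapsto Id_G+\psi(\alpha-Id_{G/N})$ is exactly what restores unitality by adjoining the complementary idempotent $Id_G-\epsilon$. Once the orthogonality relations $\psi(\alpha)(Id_G-\epsilon)=(Id_G-\epsilon)\psi(\alpha)=0$ are identified, the verification of multiplicativity, the transitivity in part (2), and the degeneration in part (1) are all mechanical.
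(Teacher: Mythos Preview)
Your proposal is correct and follows the paper's approach: both rely on \thref{EpiMul} to see that $\E_A(Inf_{G/N}^G)$ is $k$-linear, multiplicative and injective, then use the affine correction by the complementary idempotent to obtain a unital multiplicative map, with Parts (1) and (2) following from the isomorphism case of \thref{EpiMul} and transitivity of inflations respectively. The only difference is that the paper delegates the idempotent/orthogonality computation to \cite[Prop.~3.2]{Bar}, whereas you spell it out directly.
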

\begin{proof}
This follows from \cite[Prop. 3.2]{Bar} since $\E_A(Inf_{G/N}^G)$ is $k$-linear and multiplicative, and $dAInf_{G/N}^G$ is injective because $\E_A(Inf_{G/N}^G)$ is. Part (1) now follows from \thref{EpiMul} since $Inf_{G/1}^{G}\cong Iso(G/1\longrightarrow G)$, while Part (2) is a consecuence of the transitivity of inflations.
\end{proof}

\begin{prop}
Let $A$ be a $\star$-Green biset functor, $G\in Ob(\mathcal{D})$ and $N\unlhd G$. Then $dAInf_{G/N}^G(\omega^{\bullet})=dAInf_{G/N}^G(\omega)^{\bullet}$ for all $\omega \in Aut_{\AsCatA}(G/N)$, and $dAInf_{G/N}^G$ restricts to a group monomorphism $Aut_{\AsCatA}(G/N)^{\perp}\longrightarrow Aut_{\AsCatA}(G)^{\perp}$. Moreover, if $\omega \in Aut_{\AsCatA}(G/N)$ is such that $dAInf_{G/N}^G(\omega)\in Aut_{\AsCatA}(G)^{\perp}$, then $\omega \in Aut_{\AsCatA}(G/N)^{\perp}$.
\end{prop}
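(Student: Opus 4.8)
The plan is to reduce all three assertions to a single commutation identity between the duality $\Dual$ and the map $\E_A(Inf_{G/N}^G)$, and then to exploit that $dAInf_{G/N}^G$ is already known (by the preceding proposition) to be an injective group homomorphism. Concretely, the key step will be to establish the general identity
\begin{equation*}
    (\E_A(x)(\beta))^{\bullet} = \E_A(x)(\beta^{\bullet})
\end{equation*}
for every $x\in B_k(H,G)$ and every $\beta\in End_{\AsCatA}(G)$. Taking $x=Inf_{G/N}^G$ and $\beta=\omega-Id_{G/N}$ will then give Part (1): since $\Dual$ is $k$-linear and sends identities to identities, applying $\_^{\bullet}$ termwise to $dAInf_{G/N}^G(\omega)=Id_G+\E_A(Inf_{G/N}^G)(\omega-Id_{G/N})$ reproduces $Id_G+\E_A(Inf_{G/N}^G)(\omega^{\bullet}-Id_{G/N})=dAInf_{G/N}^G(\omega^{\bullet})$.

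To prove the commutation identity, I would first use that $\Dual\colon \AsCatA\to\AsCatA^{op}$ reverses composition, so that applying $\_^{\bullet}$ to $\E_A(x)(\beta)=e_{H\times G}(\overrightarrow{x})\circ\beta\circ e_{G\times H}(\overrightarrow{x^{op}})$ yields $(e_{G\times H}(\overrightarrow{x^{op}}))^{\bullet}\circ\beta^{\bullet}\circ(e_{H\times G}(\overrightarrow{x}))^{\bullet}$. The crux is then to identify $\Dual$ on the image of $e$: from the lemma asserting $\Dual\circ\mathcal{P}_e=T\circ\mathcal{P}_e$, together with $T=A(Iso(\tau_{H,G}))$ on the relevant hom-set, the naturality of $e$, and part (3) of \thref{totheleft} (which gives $Iso(\tau_{H,G})\circ\overrightarrow{x}\cong\overrightarrow{x^{op}}$), one obtains $(e_{H\times G}(\overrightarrow{x}))^{\bullet}=e_{G\times H}(\overrightarrow{x^{op}})$, and, applying the same computation to $x^{op}$ in place of $x$, also $(e_{G\times H}(\overrightarrow{x^{op}}))^{\bullet}=e_{H\times G}(\overrightarrow{x})$. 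Substituting these back collapses the expression to $e_{H\times G}(\overrightarrow{x})\circ\beta^{\bullet}\circ e_{G\times H}(\overrightarrow{x^{op}})=\E_A(x)(\beta^{\bullet})$, as desired. I expect this identification of $\Dual$ on $e$-images to be the main obstacle, since it is the point at which the specific structure of the duality, namely the twist by $\tau$ built into $T$, really enters; the rest is bookkeeping with the anti-homomorphism property and $k$-linearity.

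With Part (1) in hand, Parts (2) and (3) are formal. If $\omega\in Aut_{\AsCatA}(G/N)^{\perp}$, then using Part (1) and that $dAInf_{G/N}^G$ is a group homomorphism we get $dAInf_{G/N}^G(\omega)^{\bullet}=dAInf_{G/N}^G(\omega^{\bullet})=dAInf_{G/N}^G(\omega^{-1})=dAInf_{G/N}^G(\omega)^{-1}$, so $dAInf_{G/N}^G(\omega)$ is orthogonal; the restriction is then a monomorphism because $dAInf_{G/N}^G$ is. For the converse direction of Part (3), if $dAInf_{G/N}^G(\omega)$ is orthogonal then $dAInf_{G/N}^G(\omega^{\bullet})=dAInf_{G/N}^G(\omega)^{\bullet}=dAInf_{G/N}^G(\omega)^{-1}=dAInf_{G/N}^G(\omega^{-1})$, and injectivity of $dAInf_{G/N}^G$ forces $\omega^{\bullet}=\omega^{-1}$, so $\omega$ is orthogonal.
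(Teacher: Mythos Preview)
Your proposal is correct and follows essentially the same route as the paper: both arguments expand the definition of $dAInf_{G/N}^G(\omega)$, use that $\Dual$ is a $k$-linear anti-functor, and identify $\Dual$ on images of $e$ via $\Dual\circ\mathcal{P}_e=T\circ\mathcal{P}_e$ together with $\overrightarrow{x^{op}}\cong Iso(\tau_{H,G})\circ\overrightarrow{x}$, then deduce the remaining claims from injectivity of $dAInf_{G/N}^G$ exactly as in \thref{involnmorph}. The only cosmetic difference is that you isolate the commutation $(\E_A(x)(\beta))^{\bullet}=\E_A(x)(\beta^{\bullet})$ as a standalone identity valid for arbitrary $x$, whereas the paper performs the same computation in place for $x=Inf_{G/N}^G$.
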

\begin{proof}
We only prove that $dAInf_{G/N}^G$ commutes with the duality, the rest will follow as in \thref{involnmorph}. If $\omega \in Aut_{\AsCatA}(G/N)$, then 
\begin{align*}
    dAInf_{G/N}^G(\omega)^{\bullet} &=\left(Id_G+e_{G\times G/N}\left(\overrightarrow{Inf_{G/N}^G}\right)\circ \left(\omega - Id_{G/N}\right)\circ e_{G/N\times G}\left(\overrightarrow{Def_{G/N}^G}\right)\right)^{\bullet}\\
    &=Id_G^{\bullet}+\left(e_{G/N\times G}\left(\overrightarrow{Def_{G/N}^G}\right)\right)^{\bullet}\circ \left(\omega - Id_{G/N}\right)^{\bullet}\circ \left(e_{G\times G/N}\left(\overrightarrow{Inf_{G/N}^G}\right)\right)^{\bullet}\\
    &=Id_G+e_{G\times G/N}\left(\overrightarrow{(Def_{G/N}^G)^{op}}\right)\circ \left(\omega^{\bullet} - Id_{G/N}^{\bullet}\right)\circ e_{G/N\times G}\left(\overrightarrow{(Inf_{G/N}^G)^{op}}\right)\\
    &=Id_G+e_{G\times G/N}\left(\overrightarrow{Inf_{G/N}^G}\right)\circ \left(\omega^{\bullet} - Id_{G/N}\right)\circ e_{G/N\times G}\left(\overrightarrow{Def_{G/N}^G}\right)\\
    &=dAInf_{G/N}^G(\omega^{\bullet}).
\end{align*}
\end{proof}

If $f:A\longrightarrow C$ is a morphism of $\star$-Green biset functors, then $f_G$ can be restricted to a group homomorphism $A(G)^{\times,\perp}\longrightarrow C(G)^{\times,\perp}$ for every $G$. 

\begin{prop}
Let $f:A\longrightarrow C$ be a morphism of $\star$-Green biset functors. Then $\mathcal{P}_f(\alpha^{\bullet})=\mathcal{P}_f(\alpha)^{\bullet}$ for all $\alpha \in \AsCatA(G,H)$ and all $G,H\in Ob(\mathcal{D})$, and $\mathcal{P}_f:Aut_{\AsCatA}(G)\longrightarrow Aut_{\AsCatC}(G)$ can be restricted to a group homomorphism $Aut_{\AsCatA}(G)^{\perp}\longrightarrow Aut_{\AsCatC}(G)^{\perp}$. Moreover, the diagrams
\begin{equation}
    \xymatrix{
    A(G)^{\times,\perp}\ar[rr]^{f_G}\ar[d]_{\widetilde{\_}} && C(G)^{\times,\perp}\ar[d]^{\widetilde{\_}}\\
    Aut_{\AsCatA}(G)^{\perp}\ar[rr]_{\mathcal{P}_f} && Aut_{\AsCatC}(G)^{\perp},
    }
\end{equation}
\begin{equation}
    \xymatrix{
    Aut_{\AsCatA}(G/N)^{\perp}\ar[rr]^{\mathcal{P}_f}\ar[d]_{dAInf_{G/N}^G} && Aut_{\AsCatC}(G/N)^{\perp}\ar[d]^{dCInf_{G/N}^G}\\
    Aut_{\AsCatA}(G)^{\perp}\ar[rr]_{\mathcal{P}_f} && Aut_{\AsCatC}(G)^{\perp}
    }
\end{equation}
commute for all $G\in Ob(\mathcal{D})$ and all $N\unlhd G$.
\end{prop}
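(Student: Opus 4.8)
The plan is to reduce each assertion to three properties of $f$: its naturality as a morphism of biset functors, the identity $f_G(a^{\star})=f_G(a)^{\star}$ defining a morphism in $\StarGDk$, and the compatibility $f\circ e=e$ with the initial morphisms, which holds because $f\circ e\colon B_k\to C$ must coincide with the unique morphism $e\colon B_k\to C$. I will also use repeatedly that $\mathcal{P}_f$ is a $k$-linear functor that is the identity on objects, together with the explicit shape of the duality: since $\Dual=T\circ\mathcal{P}_\delta$ with $\delta=\invol$, \thref{OppAssCat} gives, for $\alpha\in\AsCatA(G,H)=A(H\times G)$,
$$\alpha^{\bullet}=T_{G,H}\bigl(\delta_{H\times G}(\alpha)\bigr)=A(Iso(\tau_{H,G}))(\alpha^{\star}),$$
and the analogous formula holds in $\mathcal{P}_C$.

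First I would establish $\mathcal{P}_f(\alpha^{\bullet})=\mathcal{P}_f(\alpha)^{\bullet}$. Using the formula above and $\mathcal{P}_f(\alpha)=f_{H\times G}(\alpha)$,
$$\mathcal{P}_f(\alpha^{\bullet})=f_{G\times H}\bigl(A(Iso(\tau_{H,G}))(\alpha^{\star})\bigr)=C(Iso(\tau_{H,G}))\bigl(f_{H\times G}(\alpha^{\star})\bigr)$$
by naturality of $f$ along $Iso(\tau_{H,G})$; since $f_{H\times G}(\alpha^{\star})=f_{H\times G}(\alpha)^{\star}=\mathcal{P}_f(\alpha)^{\star}$, the right-hand side is $\mathcal{P}_f(\alpha)^{\bullet}$ by the $\mathcal{P}_C$-version of the formula. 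The restriction of $\mathcal{P}_f$ to orthogonal automorphisms is then immediate: $\mathcal{P}_f$ is a functor, hence a group homomorphism on each $Aut_{\AsCatA}(G)$, and if $\omega^{\bullet}=\omega^{-1}$ then $\mathcal{P}_f(\omega)^{\bullet}=\mathcal{P}_f(\omega^{\bullet})=\mathcal{P}_f(\omega)^{-1}$. Commutativity of the first square is just naturality of $f$ along $Ind(\Delta)$: from $\widetilde{u}=A(Ind(\Delta))(u)$ one gets $\mathcal{P}_f(\widetilde{u})=f_{G\times G}(A(Ind(\Delta))(u))=C(Ind(\Delta))(f_G(u))=\widetilde{f_G(u)}$.

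For the second square the crucial step is to show that $\mathcal{P}_f$ intertwines the double-algebra functors, namely $\mathcal{P}_f(\E_A(Inf_{G/N}^G)(\beta))=\E_C(Inf_{G/N}^G)(\mathcal{P}_f(\beta))$ for every $\beta$. Expanding $\E_A$ as in its definition and using that $\mathcal{P}_f$ preserves composition, this reduces to $f_{G\times G/N}(e_{G\times G/N}(\overrightarrow{Inf_{G/N}^G}))=e_{G\times G/N}(\overrightarrow{Inf_{G/N}^G})$ and the corresponding identity for the deflation factor $e_{G/N\times G}(\overrightarrow{Def_{G/N}^G})$, both of which hold because $f\circ e=e$. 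Plugging this into $dAInf_{G/N}^G(\omega)=Id_G+\E_A(Inf_{G/N}^G)(\omega-Id_{G/N})$ and using $k$-linearity of $\mathcal{P}_f$ together with $\mathcal{P}_f(Id_G)=Id_G$ and $\mathcal{P}_f(Id_{G/N})=Id_{G/N}$, I obtain $\mathcal{P}_f(dAInf_{G/N}^G(\omega))=dCInf_{G/N}^G(\mathcal{P}_f(\omega))$, which is the second square.

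The difficulty here is entirely bookkeeping rather than conceptual: one must track the source and target groups of the bisets $Iso(\tau_{H,G})$, $Ind(\Delta)$, $Inf_{G/N}^G$ and $Def_{G/N}^G$ so that naturality of $f$ is applied along the correct morphism, and one must verify that the intertwining of $\E_A$ with $\E_C$ needs nothing beyond $f\circ e=e$, i.e. the initiality of $B_k$, rather than any finer structure of $f$. Once this is in place, the well-definedness of all the restrictions to orthogonal subgroups is automatic from the earlier parts of the proposition and the compatibility of $\widetilde{\_}$ and the maps $dAInf_{G/N}^G$ with the dualities established above.
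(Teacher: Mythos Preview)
Your argument is correct and the key computation---showing $\mathcal{P}_f(\alpha^{\bullet})=\mathcal{P}_f(\alpha)^{\bullet}$ via the explicit formula $\alpha^{\bullet}=A(Iso(\tau_{H,G}))(\alpha^{\star})$, naturality of $f$, and the $\star$-morphism condition---is exactly the one the paper gives. The paper's proof stops there, saying ``it is enough to prove that $\mathcal{P}_f$ commutes with the duality'' and leaving the commutativity of the two squares to the reader; you go further and spell these out, using naturality along $Ind(\Delta)$ for the first square and the identity $f\circ e=e$ (initiality of $B_k$) for the second, both of which are the natural and correct justifications.
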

\begin{proof}
Again, it is enough to prove that $\mathcal{P}_f$ commutes with the duality. Let $\alpha \in \AsCatA(G,H)$, then 
$$\mathcal{P}_f(\alpha^{\bullet}) =f_{G\times H}(A(Iso(\tau_{H,G}))(\alpha^{\star}))=C(Iso(\tau_{H,G}))(f_{H\times G}(\alpha)^{\star})=\mathcal{P}_f(\alpha)^{\bullet}.$$
\end{proof}


\end{document}